\theoremstyle{plain}
\newtheorem{lemma}{Lemma}[section]
\newtheorem{proposition}[lemma]{Proposition}
\newtheorem{theorem}[lemma]{Theorem}
\newtheorem{Theorem}{Theorem}
\newtheorem{corollary}[lemma]{Corollary}
\newtheorem*{main-theorem}{Theorem~\ref{theorem:main}}
\numberwithin{equation}{section}
\theoremstyle{remark}
\theoremstyle{definition}
\newtheorem{remark}[lemma]{Remark}
\newtheorem{example}[lemma]{Example}
\newtheorem{definition}[lemma]{Definition}
\newtheorem*{notation}{Notation}
\newtheorem*{ack}{Acknowledgement}
\begin{document}
\title{Mass rigidity for hyperbolic manifolds}
\author{Lan-Hsuan Huang}
\author{Hyun Chul Jang}
\address{Department of Mathematics, University of Connecticut, Storrs, CT 06269, USA}
\email{lan-hsuan.huang@uconn.edu\\ hyun.c.jang@uconn.edu}

\author{Daniel Martin}
\address{Department of Mathematics, Trinity College, Hartford, CT 06106, USA}
\email{daniel.martin@trincoll.edu}
\date{}
\dedicatory{Dedicated to Professor Greg Galloway on the occasion of
  his seventieth birthday}
\begin{abstract}
We prove the rigidity of the positive mass theorem for asymptotically hyperbolic manifolds. Namely, if the mass equality $p_0=\sqrt{p_1^2+\cdots+ p_n^2}$ holds, then the manifold is isometric to hyperbolic space. The result was previously proven  for spin manifolds  \cite{Min-Oo:1989, Wang.X:2001, Andersson-Dahl:1998, Chrusciel-Herzlich:2003} or under special asymptotics~\cite{Andersson-Cai-Galloway:2008}. 

\end{abstract}
\maketitle

\section{Introduction}

One of the central topics in differential geometry is to understand how Riemannian manifolds can be characterized under a  curvature assumption. The seminal work of R. Schoen and S.-T. Yau~\cite{Schoen-Yau:1979-pmt1} of the Riemannian positive mass theorem establishes a characterization of Euclidean space. Specifically,  Euclidean space is the  unique asymptotically flat manifold with nonnegative scalar curvature that has zero ADM mass, which is an invariant defined at the manifold's infinity.    E.~Witten~\cite{Witten:1981} later introduced a different method using spinors. M.~Min-Oo~\cite{Min-Oo:1989} adapted the spinor approach for strongly asymptotically hyperbolic manifolds  (with a  corrected assumption by~E.~Delay~\cite[Definition 1]{Delay:1997hy}) and gave a characterization of hyperbolic space, which was later refined by L.~Andersson and M.~Dahl~\cite{Andersson-Dahl:1998}.  Based on the spinor approach, X. Wang~\cite{Wang.X:2001} defined the mass and  established the positive mass theorem for conformally compact, asymptotically hyperbolic manifolds  $(X^n, g)$ whose conformal boundary  is the unit round sphere $(S^{n-1}, h)$ and with the following expansion:
\begin{align} \label{equation:conformal-compact}
	g = \frac{1}{(\sinh \rho)^{2}} \left( d\rho^2 + h + \frac{\rho^n}{n} \kappa + O(\rho^{n+1})\right)
\end{align}
where $\rho$ is a boundary defining function and $\kappa$ is a symmetric $(0, 2)$-tensor defined on $S^{n-1}$. The mass $(p_0, p_1,\dots, p_n)$ of $g$ is defined by 
\[
p_0 = \int_{S^{n-1}} \mathrm{tr}_h \kappa\, d\mu_h, \qquad p_i = \int_{S^{n-1}}x_i \mathrm{tr}_h \kappa \, d\mu_h \qquad \mbox{for $i=1, \dots, n$}
\]
where $(x_1, \dots, x_n)$  are the Cartesian coordinates of $\mathbb{R}^n$ restricted on $S^{n-1}$. It is  an intriguing  observation that the mass consists of $(n+1)$ numbers $(p_0, p_1, \dots, p_n)$, instead of a single number, the ADM mass, as for the asymptotically flat manifolds.  In \cite{Chrusciel-Herzlich:2003}, P.~Chru\'sciel and M. Herzlich extended the definition of mass to a larger class of manifolds without assuming conformal compactification and obtained a flux integral formula, which we will recall in Definition~\ref{def:massfunctional}.  As a result, the following positive mass theorem holds for spin manifolds. 
\begin{Theorem}[\cite{Wang.X:2001, Chrusciel-Herzlich:2003}]
Let $n\ge 3$ and $(X, g)$ an $n$-dimensional asymptotically hyperbolic manifold with scalar curvature $R_g \ge -n(n-1)$. Suppose $X$ is spin. Then $p_0 \ge \sqrt{p_1^2+ \cdots+ p_n^2}$ with equality only if $(X, g)$ is isometric to hyperbolic space. 
\end{Theorem}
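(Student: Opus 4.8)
The plan is to run Witten's spinorial argument, adapted to the hyperbolic background by replacing the Levi-Civita spin connection with the modified connection whose parallel sections are the imaginary Killing spinors of hyperbolic space. Concretely, on the spinor bundle of $(X,g)$ set $\hat\nabla_U \psi = \nabla_U \psi + \tfrac12 c(U)\psi$, where $c$ denotes Clifford multiplication, and let $\hat D = \sum_j c(e_j)\hat\nabla_{e_j}$ be the associated modified Dirac operator; the spin hypothesis guarantees that these are globally defined. The first task is to record the Weitzenböck identity for $\hat D$, which takes the form $\hat D^2 = \hat\nabla^*\hat\nabla + \tfrac14\big(R_g + n(n-1)\big)$, so that the zeroth-order term is pointwise nonnegative precisely under the curvature hypothesis $R_g \ge -n(n-1)$.

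The second step is to solve a boundary value problem at infinity. Given an imaginary Killing spinor $\psi_0$ of the hyperbolic background (a $\hat\nabla$-parallel spinor on the model end), I would use Fredholm theory on weighted Sobolev spaces to produce a spinor $\psi$ on $X$ with $\hat D\psi = 0$ and $\psi - \psi_0 \to 0$ in the asymptotic region at the appropriate rate. The required invertibility of $\hat D$ between the relevant weighted spaces follows from the Weitzenböck estimate together with the known mapping properties of $\hat D$ on the hyperbolic end. With such a $\psi$ in hand, integrating the Weitzenböck formula over $X$ and integrating by parts yields the Witten-type identity equating a boundary integral over the sphere at infinity with the manifestly nonnegative bulk integral
\[
\int_X \Big( |\hat\nabla\psi|^2 + \tfrac14\big(R_g + n(n-1)\big)|\psi|^2 \Big)\,d\mu_g \ge 0.
\]

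The third step is to identify the boundary term with the mass. Evaluating the flux integral of Definition~\ref{def:massfunctional} against the asymptotic datum $\psi_0$ shows that the boundary integral is a Hermitian form $Q$ on the finite-dimensional space of imaginary Killing spinors, whose matrix in a suitable basis is $p_0\,\mathrm{Id} + \sum_i p_i\,\gamma_i$ for Clifford generators with $\gamma_i\gamma_j + \gamma_j\gamma_i = 2\delta_{ij}\,\mathrm{Id}$. Since $\big(\sum_i p_i\gamma_i\big)^2 = \big(\sum_i p_i^2\big)\mathrm{Id}$, the eigenvalues of this matrix are $p_0 \pm \sqrt{p_1^2+\cdots+p_n^2}$, so the nonnegativity of $Q$ forced by the bulk integral is equivalent to the asserted inequality $p_0 \ge \sqrt{p_1^2+\cdots+p_n^2}$.

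Finally, suppose equality holds. Then $Q$ is positive semidefinite with nontrivial kernel, so choosing $\psi_0$ in the kernel makes the bulk integral vanish; the associated Witten spinor therefore satisfies $\hat\nabla\psi \equiv 0$ and $R_g \equiv -n(n-1)$. A nontrivial $\hat\nabla$-parallel spinor is nowhere zero, and its integrability condition $\hat R(U,V)\psi = 0$ reads
\[
\sum_{k,l}\big(R_{ijkl} + (g_{ik}g_{jl} - g_{il}g_{jk})\big)\,c(e_k)c(e_l)\,\psi = 0 ,
\]
which upon contraction forces the trace-free Ricci deviation to vanish, so $(X,g)$ is Einstein with $\mathrm{Ric} = -(n-1)g$. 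To upgrade Einstein to constant curvature $-1$, I would invoke the structure theory of complete manifolds carrying imaginary Killing spinors together with the asymptotically hyperbolic geometry, concluding that $(X,g)$ is isometric to hyperbolic space. I expect this last rigidity step to be the main obstacle: a single imaginary Killing spinor yields only the Einstein condition, so passing to the full vanishing of the curvature deviation requires exploiting the half-dimensional kernel of $Q$ and the global warped-product structure forced by the spinor; the construction of the Witten spinor is the principal technical point, depending on the sharp behavior of $\hat D$ on the hyperbolic end.
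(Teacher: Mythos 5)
Your proposal is, in outline, the correct argument for this statement, but note that the paper itself never proves it: the theorem is quoted as background from the cited works of Wang and Chru\'sciel--Herzlich, and what you have written is precisely the Witten-type spinor proof given there --- the Killing-modified connection, the Lichnerowicz--Weitzenb\"ock identity with zeroth-order term $\tfrac14\big(R_g+n(n-1)\big)$, a Witten spinor asymptotic to an imaginary Killing spinor produced by weighted Fredholm theory, the identification of the boundary term with the Hermitian form $p_0\,\mathrm{Id}+\sum_i p_i\gamma_i$ whose eigenvalues are $p_0\pm\sqrt{p_1^2+\cdots+p_n^2}$, and, in the equality case, a nontrivial imaginary Killing spinor forcing $\mathrm{Ric}_g=-(n-1)g$ and then isometry with $\mathbb{H}^n$ via the classification (due to Baum) of complete manifolds admitting imaginary Killing spinors combined with the asymptotics. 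Two remarks. First, a convention slip: with the usual Clifford relations $c(X)c(Y)+c(Y)c(X)=-2g(X,Y)\,\mathrm{Id}$, the modified connection must carry an imaginary coefficient, $\hat\nabla_U=\nabla_U+\tfrac{i}{2}c(U)$; with the real coefficient you wrote, parallel sections are not the imaginary Killing spinors and the Weitzenb\"ock identity you state does not hold (also, since $\hat D$ is then not formally self-adjoint, the identity should read $\hat D^*\hat D=\hat\nabla^*\hat\nabla+\tfrac14(R_g+n(n-1))$, which is what the integration by parts actually uses). Second, the contrast with the paper is the whole point of the paper: its own route to rigidity avoids spinors entirely --- the mass equality makes $g$ a constrained local minimizer of the functional $\mathcal{F}$ in \eqref{equation:functional}, Lagrange multipliers together with the surjectivity of $L_g$ (Theorem~\ref{theorem:surjectivity}) produce a static potential with prescribed asymptotics (Theorem~\ref{theorem:mass-rigidity}), and static uniqueness (Proposition~\ref{proposition:rigidity}) yields hyperbolic space. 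That approach dispenses with the spin hypothesis but must take positivity of the mass for nearby metrics as an input (hypothesis ($\star$), or the result of Chru\'sciel--Delay), whereas your spinor argument proves the inequality and the rigidity simultaneously but only on spin manifolds; the equality-case step you flag as the main obstacle is genuine but is handled in the cited references exactly along the lines you indicate.
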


It has been conjectured that the positive mass theorem for asymptotically hyperbolic manifolds holds without the spin assumption. Assuming that the mass aspect function $\mathrm{tr}_h \kappa$ in \eqref{equation:conformal-compact} is either everywhere positive, zero, or negative on $S^{n-1}$,  L.~Andersson, M.~Cai, and G.~Galloway \cite{Andersson-Cai-Galloway:2008} proved the positive mass theorem for dimensions $3\le n \le 7$. For more general asymptotics, an approach using Jang's equation to the positivity of mass in three dimensions was announced by A. Sakovich. A recent paper~\cite{Chrusciel-Delay:2019}  of P. Chru\'sciel and E. Delay proves  the positivity by a gluing argument in general dimensions. Nevertheless, these two approaches to the positivity of mass are indirect and do not seem to give information about the equality case, which is the focus of the current paper.

Our main result is the following rigidity statement. The technical terms are defined in Section~\ref{section:preliminary}. We also refer to Definition~\ref{definition:AH} for the precise definition of asymptotically hyperbolic manifolds, which includes a technical assumption that $g\in C^\infty_\mathrm{loc}$. See Remark~\ref{remark:smooth}.

\begin{Theorem}\label{theorem:main}
Let $n\ge 3$ and $(M,g)$ be an $n$-dimensional asymptotically hyperbolic manifold with scalar curvature $R_g \ge -n(n-1)$ and with equality $p_0 = \sqrt{p_{1}^2 + \cdots + p_{n}^2}$, where $(p_0, p_1, \dots, p_n)$ is the mass of $g$.   Suppose the following holds:
\begin{itemize}  
\item[($\star$)] There is an open neighborhood $\mathcal{M}$ of $g$ in the space of asymptotically hyperbolic metrics on $M$ such that the inequality $p_0(\gamma)\ge \sqrt{ (p_1(\gamma))^2 + \dots + (p_n(\gamma))^2}$ holds if $\gamma\in \mathcal{M}$ and the scalar curvature satisfies $R_\gamma = R_g$.
\end{itemize}
 Then $(M, g)$ is isometric to hyperbolic space.
 \end{Theorem}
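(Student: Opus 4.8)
The plan is to use hypothesis $(\star)$ variationally. The equality $p_0=\sqrt{p_1^2+\cdots+p_n^2}$ together with $(\star)$ says exactly that $g$ minimizes the mass functional
\[
\mathcal F(\gamma) = p_0(\gamma) - \sqrt{p_1(\gamma)^2+\cdots+p_n(\gamma)^2}
\]
over the constraint set $\{\gamma\in\mathcal M : R_\gamma = R_g\}$, with minimum value $\mathcal F(g)=0$. If the full mass vector $(p_0,\dots,p_n)$ vanishes I would argue separately; otherwise the equality forces it to be a nonzero null vector, so $\sqrt{p_1^2+\cdots+p_n^2}>0$ and $\mathcal F$ is smooth near $g$. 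Since the $(n{+}1)$-tuple $(p_0,\dots,p_n)$ transforms as a vector in $\mathbb R^{1,n}$ under the isometry group of hyperbolic space, I would first apply a boost to place the null mass vector in a normalized (horospherical) direction, so that the differential of $\mathcal F$ at $g$ selects a single definite linear combination of the Chru\'sciel--Herzlich charge potentials.

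The next and central step is to convert minimality into the existence of a static potential. Write $DR_g$ for the linearization of the scalar curvature map at $g$ and $DR_g^*$ for its formal $L^2$-adjoint,
\[
DR_g^*(V) = -(\Delta_g V)\,g + \mathrm{Hess}_g\, V - V\,\mathrm{Ric}_g .
\]
Minimality gives $D\mathcal F(h)=0$ for every symmetric $2$-tensor $h$ with $DR_g(h)=0$, so the flux functional $D\mathcal F$ annihilates $\ker DR_g$. In suitable weighted H\"older or Sobolev spaces $DR_g$ is Fredholm, with cokernel equal to $\ker DR_g^*$, the space of static potentials; integrating by parts then produces a nontrivial $V$ with $DR_g^*V=0$ whose leading asymptotics match the normalized null combination $V_{(0)}-\sum_i \tfrac{p_i}{p_0}V_{(i)}$ of the standard static potentials of $\mathbb H^n$, so that $V>0$ near infinity. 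The mechanism is a local deformation theorem of Corvino--Schoen/Delay type: were there no such potential, $DR_g$ would be surjective onto a neighborhood of $R_g$, and solving the nonlinear constraint by an implicit-function argument would yield a competitor $\gamma\in\mathcal M$ with $R_\gamma=R_g$ but $\mathcal F(\gamma)<0$, contradicting the inequality in $(\star)$.

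Finally I would classify the static triple $(M,g,V)$. Tracing $DR_g^*V=0$ against $R_g=-n(n-1)$ gives $\Delta_g V = nV$, while the trace-free part gives $(\mathrm{Hess}_g V)_0 = V\,(\mathrm{Ric}_g)_0$. Combining these with the positivity of $V$ at infinity and a Bochner/Pohozaev-type integration over the level sets and regular set of $V$ controls the geometry --- the level sets behave like horospheres --- and forces $(M,g)$ to be Einstein of constant curvature $-1$; the prescribed asymptotics then identify it with hyperbolic space. Since $\mathbb H^n$ has vanishing mass, this is consistent with, and sharpens, the equality hypothesis.

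I expect the main obstacle to be the second step: making the implication ``$g$ minimizes $\mathcal F$'' $\Rightarrow$ ``a static potential exists'' rigorous in the asymptotically hyperbolic category. This requires a local surjectivity/deformation result for the scalar curvature operator with precise control at infinity, so that the deforming family keeps $R_\gamma=R_g$ exactly while moving the mass in the chosen null direction, together with a careful weighted Fredholm analysis identifying the cokernel of $DR_g$ with the static potentials. A secondary difficulty is the static classification itself, where one must handle the possible critical set $\{V=0,\ \nabla_g V=0\}$ and carry out the asymptotic matching that pins the metric down to $\mathbb H^n$.
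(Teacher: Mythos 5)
Your overall route --- read the mass equality plus ($\star$) as saying that $g$ minimizes a mass functional under the scalar-curvature constraint, extract a static potential from the first-order condition, then classify the static manifold --- is the same as the paper's. But the central step, producing the static potential, has two genuine gaps as you have set it up. First, your functional $\mathcal F(\gamma)=p_0(\gamma)-\sqrt{p_1(\gamma)^2+\cdots+p_n(\gamma)^2}$ is only defined on the constraint set: for a general $\gamma$ near $g$ one only knows $R_\gamma+n(n-1)\in C^{0,\alpha}_{-q}$, which is not integrable, so the individual mass integrals $p_k(\gamma)$ may diverge and ``$D\mathcal F(h)$'' has no meaning on any open neighborhood. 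The paper's fix (Lemma~\ref{lemma:functional}) is to take the \emph{linear} combination $p_0\,p_0(\gamma)-\sum_i p_i\,p_i(\gamma)$, with coefficients frozen at the mass of $g$, \emph{minus} the correction term $\int_M (R(\gamma)+n(n-1))\,f\,d\mu_g$, where $f$ is an explicitly constructed eigenfunction $\Delta f=nf$ asymptotic to $p_0\sqrt{1+r^2}-\sum_i p_i x_i$ (Lemma~\ref{lemma:eigenfunction}); this combination is finite and differentiable on a whole neighborhood, with $D\mathcal F|_g(h)=-\int_M h\cdot L_g^*f\,d\mu_g$, and Cauchy--Schwarz converts ($\star$) into minimality. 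Note also that your square-root functional degenerates exactly in the case $p_0=0$ that you defer; the paper handles that case by the same scheme with coefficients $a_0=1$, $a_i=0$.

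Second, and more serious, your mechanism for extracting the potential is wrong: you identify the cokernel of $DR_g$ in the weighted spaces with the space of static potentials and run the dichotomy ``no potential $\Rightarrow$ $DR_g$ surjective $\Rightarrow$ deform to get $\mathcal F<0$.'' In the weighted H\"older spaces where the variational problem lives, $L_g=DR_g$ is surjective for \emph{every} asymptotically hyperbolic manifold --- this is Theorem~\ref{theorem:surjectivity}, one of the paper's main results --- precisely because any kernel element of $L_g^*$ in the dual space would have to grow linearly in a cone (Corollary~\ref{corollary:asymptotics}), which no bounded functional on $C^{k-2,\alpha}_{-s}$ can do. Static potentials always grow linearly, so they are never cokernel elements in these spaces; your dichotomy therefore always lands in the ``surjective'' branch and yields nothing. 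What actually happens in the proof: surjectivity is used to invoke the Lagrange multiplier theorem, producing $\lambda\in (C^{0,\alpha}_{-q})^*$ with $D\mathcal F|_g=\lambda\circ L_g$; elliptic regularity for distributional solutions upgrades $\lambda$ to a classical solution of $L_g^*\lambda=-L_g^*f$; the growth dichotomy for solutions of $L_g^*V=\tau$ (Theorem~\ref{theorem:expansion}, proved by ODE comparison along geodesic rays) excludes linear growth of $\lambda$ because $\lambda$ is a bounded functional, so $\lambda\to 0$ at infinity; then tracing the equation and applying the maximum principle give $\lambda\equiv 0$, i.e.\ the explicit eigenfunction $f$ is itself static. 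This analysis of $L_g^*$ on functions that are \emph{allowed to grow} --- not a Fredholm cokernel computation --- is the crux of the proof, and it is exactly what your proposal assumes away. (Your final classification step is fine in outline and matches Lemma~\ref{lemma:integral}, Proposition~\ref{proposition:mass}, and Proposition~\ref{proposition:rigidity}, including the need to handle the critical set of the potential.)
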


Using  positivity of mass proven in \cite{Chrusciel-Delay:2019},  the assumption~($\star$)  can be dropped and thus we arrive at the following result. 

\begin{Theorem}
Let $n\ge 3$ and $(M,g)$ an $n$-dimensional asymptotically hyperbolic manifold with scalar curvature $R_g \ge -n(n-1)$ and with the equality $p_0 = \sqrt{p_{1}^2 + \cdots + p_{n}^2}$.  Then $(M, g)$ is isometric to hyperbolic space.
\end{Theorem}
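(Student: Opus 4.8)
The plan is to deduce the final statement directly from Theorem~\ref{theorem:main} by verifying that its hypothesis ($\star$) becomes automatic once one has an unconditional positive mass inequality. Theorem~\ref{theorem:main} already reduces the rigidity in the equality case to the local inequality ($\star$), so the only task is to exhibit an open neighborhood $\mathcal{M}$ of $g$ on which $p_0(\gamma) \ge \sqrt{(p_1(\gamma))^2 + \cdots + (p_n(\gamma))^2}$ holds for every $\gamma \in \mathcal{M}$ with $R_\gamma = R_g$.

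First I would invoke the positive mass inequality established by Chru\'sciel and Delay in \cite{Chrusciel-Delay:2019}: for every asymptotically hyperbolic manifold with scalar curvature $R \ge -n(n-1)$, one has $p_0 \ge \sqrt{p_1^2 + \cdots + p_n^2}$, in all dimensions $n \ge 3$ and with no spin assumption. Since $(M,g)$ satisfies $R_g \ge -n(n-1)$ by hypothesis, any metric $\gamma$ with $R_\gamma = R_g$ automatically satisfies $R_\gamma \ge -n(n-1)$. Hence the positivity of \cite{Chrusciel-Delay:2019} applies to every such $\gamma$ and yields the desired inequality. In particular ($\star$) holds with $\mathcal{M}$ taken to be the entire space of asymptotically hyperbolic metrics on $M$ (equivalently, any open neighborhood of $g$). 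With ($\star$) verified, I would simply apply Theorem~\ref{theorem:main} to conclude that $(M,g)$ is isometric to hyperbolic space.

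The single point requiring care, and the main potential obstacle, is to confirm that the class of asymptotically hyperbolic manifolds for which \cite{Chrusciel-Delay:2019} proves positivity contains the class defined here in Definition~\ref{definition:AH}, so that the inequality may be quoted verbatim for every metric in $\mathcal{M}$. This amounts to reconciling the asymptotic decay and regularity conventions of the two papers; once the definitions are matched (or the weaker set of hypotheses is seen to suffice to run the positivity argument), the deduction is immediate and requires no further analytic input beyond what is already contained in Theorem~\ref{theorem:main} and \cite{Chrusciel-Delay:2019}.
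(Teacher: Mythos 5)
Your proposal is correct and takes essentially the same route as the paper: the paper deduces this theorem from Theorem~\ref{theorem:main} in exactly this way, observing that the unconditional positive mass inequality of \cite{Chrusciel-Delay:2019} makes the hypothesis ($\star$) automatic for every competitor $\gamma$ with $R_\gamma = R_g \ge -n(n-1)$. The caveat you flag---matching the asymptotic and regularity conventions of \cite{Chrusciel-Delay:2019} with Definition~\ref{definition:AH}---is likewise the only implicit step in the paper's own one-line deduction.
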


We outline the proof of Theorem~\ref{theorem:main}, which is included in Section~\ref{section:main-theorem}. We show that a metric that realizes the mass equality is a minimizer of a functional~$\mathcal{F}$, defined by \eqref{equation:functional}, subject to a  scalar curvature constraint. By studying the first variation of this functional, we show that such a metric must be static and, in fact, possess a static potential with certain asymptotics. The desired characterization of hyperbolic space follows from proving a static uniqueness result. 

We remark that the approach is motivated by a constrained minimization scheme proposed by R.~Bartnik~\cite{Bartnik:2005} for his quasi-local mass program. The connection between the constrained minimization and mass rigidity was recently employed by D. Lee and the first named author in their proof to the rigidity conjecture of the spacetime positive mass theorem~\cite{Huang-Lee:2019}. 

In our proof of Theorem~\ref{theorem:main}, it is essential to analyze the scalar curvature map and to derive the following result.
\begin{Theorem}\label{theorem:surjectivity}
Let $(M, g)$ be an $n$-dimensional asymptotically hyperbolic manifold. For $k\ge 2$ and $s\in (-1, n)$, the linearized scalar curvature map
\[
	L_g : C^{k,\alpha}_{-s}(M)\to C^{k-2,\alpha}_{-s}(M)
\]
is surjective. As a consequence, the scalar curvature map is locally surjective at $g$. Namely, there are constants $\epsilon, C>0$ such that if $\| \phi - R_g \|_{C^{k-2,\alpha}_{-s}(M)} <\epsilon$, then there is a metric $\gamma$ with $\| \gamma-g\|_{C^{k,\alpha}_{-s}(M)}\le C\epsilon$  that realizes the scalar curvature $R_\gamma = R_g + \phi$. 
\end{Theorem}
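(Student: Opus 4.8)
The plan is to read $L_g$ as the full linearized scalar curvature operator acting on symmetric $2$-tensors,
\[
L_g h = -\Delta_g(\mathrm{tr}_g h) + \mathrm{div}_g\mathrm{div}_g h - \langle h, \mathrm{Ric}_g\rangle_g,
\]
and to establish surjectivity by the adjoint method. The formal $L^2$-adjoint is the static operator
\[
L_g^* f = \nabla^2 f - (\Delta_g f)\,g - f\,\mathrm{Ric}_g,
\]
and the duality principle for elliptic operators reduces the problem to two analytic inputs: a closed-range (Fredholm) statement in weighted Hölder spaces, and the triviality of the cokernel, which is isomorphic to the kernel of $L_g^*$ in the dual weighted space. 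Once $L_g$ has closed range and trivial cokernel, it is surjective onto $C^{k-2,\alpha}_{-s}(M)$.

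For the Fredholm statement I would invoke the weighted elliptic theory for geometric (uniformly degenerate) operators on asymptotically hyperbolic manifolds. The model of $L_g$ at infinity is governed by hyperbolic space, where $\mathrm{Ric}_g=-(n-1)g$ and the operator has constant coefficients. The relevant indicial computation enters through the trace of the static system: any $f$ with $L_g^* f=0$ satisfies $\Delta_g f = -\tfrac{R_g}{n-1}f$, whose model at infinity is $(\Delta_{\mathbb{H}^n}-n)f=0$. Writing solutions as $\rho^\mu$ for a boundary defining function $\rho\sim e^{-r}$ and using $\Delta_{\mathbb{H}^n}\rho^\mu \sim \mu(\mu-(n-1))\rho^\mu$, the equation $\mu(\mu-(n-1))=n$ yields the two characteristic exponents $\mu=-1$ and $\mu=n$. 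Thus $L_g$ is Fredholm on $C^{k,\alpha}_{-s}(M)\to C^{k-2,\alpha}_{-s}(M)$ for every weight $s$ avoiding the indicial roots, and the interval $(-1,n)$ between them is exactly the range in the statement.

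The heart of the matter, and the step I expect to be the main obstacle, is ruling out the cokernel, i.e. showing that no nontrivial static potential survives in the dual space when $s\in(-1,n)$. By the indicial analysis, any static potential has an asymptotic expansion built from the exponents $\rho^{-1}$ and $\rho^n$, the first being the growth rate of the hyperbolic static potentials $\cosh r$ and $\sinh r\cdot x_i$. The strict lower bound $s>-1$ forces every cokernel element to decay rather than to exhibit $\rho^{-1}$ growth, so any obstruction must be a static potential that is $O(\rho^n)$. I would eliminate such a potential by combining an integration-by-parts (Pohozaev-type) identity with unique continuation from infinity for the \emph{overdetermined} system $\nabla^2 f = f\,\mathrm{Ric}_g + (\Delta_g f)g$, using the full equation — not merely its trace — to upgrade fast decay to $f\equiv 0$. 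Handling the borderline asymptotics at the indicial roots and the precise mapping properties of $L_g^*$ on the dual weighted spaces is the technically delicate part.

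Finally, the passage from linear surjectivity to local surjectivity of the nonlinear map $\gamma\mapsto R_\gamma$ is routine. Surjectivity with closed range produces a bounded right inverse, which I would realize explicitly as $L_g^*\circ(L_gL_g^*)^{-1}$ after verifying that the fourth-order operator $L_gL_g^*$ is elliptic and an isomorphism in the given weighted spaces. Writing $R_{g+h}=R_g+L_g h+Q_g(h)$ with $Q_g$ the quadratic remainder, a contraction-mapping (inverse function theorem) argument in $C^{k,\alpha}_{-s}(M)$ then yields, for $\|\phi-R_g\|_{C^{k-2,\alpha}_{-s}(M)}<\epsilon$, a metric $\gamma=g+h$ with $R_\gamma=R_g+\phi$ and $\|\gamma-g\|_{C^{k,\alpha}_{-s}(M)}\le C\epsilon$, as claimed.
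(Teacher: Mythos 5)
Your skeleton---closed range for $L_g$ plus triviality of the kernel of $L_g^*$ on the dual space---is the same as the paper's, and your indicial roots $-1$ and $n$ are correct. But the step you yourself identify as the heart of the matter contains a genuine gap, and in fact you have the two halves of the dichotomy backwards. The dual space $(C^{k-2,\alpha}_{-s})^*$ is \emph{not} a weighted function space, so an element $u$ weakly solving $L_g^*u=0$ carries no a priori asymptotic expansion, indeed no growth bound at all; the assertion that ``the strict lower bound $s>-1$ forces every cokernel element to decay'' has no mechanism behind it. Even at the heuristic level of pairing weights it is misattributed: pairing a linearly growing function against $\phi\sim r^{-s}$ with respect to the volume form $\sim r^{n-2}\,dr\,d\omega$ diverges precisely when $s\le n$, so it is the \emph{upper} bound on $s$ that excludes growing kernel elements, while $s>-1$ enters through the isomorphism theorem for $\Delta-n$ (Lemma~\ref{lemma:isomorphism}) needed for closed range---heuristically, $s>-1$ is exactly the condition under which $r^{-n}$-decaying potentials \emph{do} pair boundedly, i.e.\ it makes the decaying case a potential obstruction rather than ruling it out. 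What the paper does instead is: elliptic regularity for distributions (this is where $g\in C^\infty_{\mathrm{loc}}$ is used) turns $u$ into a $C^{k,\alpha}_{\mathrm{loc}}$ function, and then an ODE analysis along geodesics (Theorem~\ref{theorem:expansion}, Corollary~\ref{corollary:asymptotics}) yields a dichotomy: either $|u|\ge C|x|$ on some cone, or $|u|\le C|x|^{-d}$ everywhere. The growing case is the hard one and is killed by the divergence of the pairing against a cutoff of $|x|^{-s}$ supported in that cone, contradicting boundedness of $u$ as a functional; the decaying case---the one you propose to attack with Pohozaev identities and unique continuation from infinity---is the easy one, since the trace of the static equation gives $\Delta u=nu$ and a decaying solution vanishes by the maximum principle. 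Without a valid argument excluding linearly growing kernel elements of $L_g^*$, your proof of surjectivity does not close.

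A secondary gap: $L_g$ maps symmetric $(0,2)$-tensors to functions, so it is underdetermined, not elliptic, and it is not Fredholm in any literal sense (its kernel is infinite dimensional); the uniformly degenerate elliptic theory you invoke does not apply to it directly. The paper obtains closed range by restricting to conformal deformations: $u\mapsto L_g(ug)=(1-n)\big(\Delta u+\tfrac{1}{n-1}R_g u\big)$ is a scalar operator asymptotic to $(1-n)(\Delta-n)$, hence Fredholm for $s\in(-1,n)$ by Proposition~\ref{proposition:Fredholm}, and its range, which has finite codimension, is contained in the range of $L_g$. Your final step (right inverse $L_g^*(L_gL_g^*)^{-1}$ plus contraction mapping) is a standard alternative to the paper's appeal to ``standard functional analysis,'' but note that it presupposes an isomorphism statement for the fourth-order operator $L_gL_g^*$ in weighted H\"older spaces whose injectivity rests on exactly the kernel analysis that is missing above.
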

Theorem~\ref{theorem:surjectivity} is also of independent interest from the perspective of scalar curvature deformation. For example, it produces infinitely many asymptotically hyperbolic metrics  with scalar curvature greater than $-n(n-1)$ by perturbation. 

We remark that  the weighted H\"older space is chosen as our analytical framework because the known results on the positivity of mass require that regularity.  It is shown that  the Einstein constraint map is surjective among the appropriate weighted \emph{Sobolev} spaces by E.~Delay and J.~Fougeirol \cite{Delay-Fougeirol:2016}. However, it does not seem to imply Theorem~\ref{theorem:surjectivity}. In fact, our proof relies on a different argument. One difficulty is that  the dual space $(C^{k-2,\alpha}_{-s})^*$ is not well-understood.  Efforts are made to analyze the kernel of the adjoint operator $L_g^*$ on $(C^{k-2,\alpha}_{-s})^*$ without assuming the kernel elements to decay at infinity. See Section~\ref{section:static} and more specifically, Theorem~\ref{theorem:expansion}.

Finally, we remark that the proof of Theorem~\ref{theorem:surjectivity} uses the assumption that an asymptotically hyperbolic manifold  is complete without boundary (see Definition~\ref{definition:AH}). For manifolds with  compact boundary,  while the same argument still works if one imposes either Dirichlet or Neumann type  condition on  the  metrics, we need the surjectivity to hold for tensors with stronger vanishing condition at the boundary to establish the mass rigidity. In a forthcoming paper, we use a different argument and extend Theorem~\ref{theorem:surjectivity} for metrics that coincide with $g$ of infinite order at the boundary. It enables us to  prove the mass rigidity for asymptotically \emph{locally} hyperbolic manifolds. In that setting, the model spaces that we consider have compact boundary with natural geometric boundary conditions.

\begin{ack}
The project was initiated while the authors participated in the 2017 summer program on  Geometry and Relativity at the Erwin Schr\"odinger Institute. We would like to express our sincere gratitude to the organizers Robert Beig, Piotr Chru\'sciel, Michael Eichmair, Greg Galloway, Richard Schoen, Tim-Torben Paetz for their warm hospitality and the inspiring program.  

The project was  partially supported by the NSF Career award DMS-1452477. L.-H.~Huang was also  supported by Simons Fellowship of the Simons Foundation and  von Neumann Fellowship at the Institute for Advanced Study.
\end{ack}

\section{Preliminaries}\label{section:preliminary}

\subsection{Weighted H\"older spaces and asymptotically hyperbolic manifolds}

Denote by $\mathbb{H}^n$ the $n$-dimensional hyperbolic space with scalar curvature $-n(n-1)$. As our model for hyperbolic space, we consider the upper-sheet of the hyperboloid  in Minkowski space $(\mathbb{R}^{n,1}, -dt^2 + dx_1^2 + \cdots + dx_n^2$), defined by
\[
	\mathbb{H}^n = \Big\{ (x,t) = (x_1, \dots, x_n, t) \in \mathbb{R}^{n,1}: t = \sqrt{1+x_1^2+ \dots + x_n^2}\Big\}.
\] 
The restriction of the Minkowski metric to the upper-sheet hyperboloid is hyperbolic space and can be expressed in spherical coordinates as 
\begin{align}\label{equation:hyperboloid}
	b= \frac{1}{1+r^2}\, dr^2+ r^2 h,
\end{align}
where $r = |x|:= \sqrt{x_1^2+\cdots + x_n^2}$ is the radial coordinate, and $h$ is the standard metric on the round unit $(n-1)$-sphere. We refer $(\mathbb{R}^n, b)$ as the \emph{hyperboloid model} of hyperbolic space.

The volume form of $b$ is $d\mu_b = \frac{r^{n-1}}{\sqrt{1+r^2}}\, dr \,d\omega$, where $d\omega $ is the volume form on the round unit $(n-1)$-sphere.  By the co-area formula, it is direct to see that the induced volume form on $S_r=\{ |x|=r\}$ of the hyperbolic metric $b$ is the same as the standard volume form on the round $(n-1)$-sphere of radius~$r$.

Let $B$ be an open ball in $\mathbb{R}^n$ centered at the origin. Denote $\mathbb{H}^n\setminus B = (\mathbb{R}^n\setminus B, b)$.   We fix an orthonormal frame $\{ e_1, \dots, e_n\}$ on $\mathbb{H}^n\setminus B$ defined by, with respect to the spherical coordinates $\{ r, \theta_1, \dots, \theta_{n-1}\}$, 
\begin{align} \label{equation:orthonormal-frame}
	e_1 = \sqrt{1+r^2} \tfrac{\partial}{\partial r}, \quad e_2 = r^{-1} \tfrac{\partial}{\partial \theta_1},\quad \dots,\quad  e_n = (r\sin(\theta_1)\dots\sin(\theta_{n-2}))^{-1} \tfrac{\partial}{\partial \theta_{n-1}}. 
\end{align}

\begin{definition}
For $k=0, 1, 2, \dots$, $ \alpha \in (0, 1)$, and $q\in \mathbb{R}$, we define the \emph{weighted H\"older spaces} $C^{k,\alpha}_{-q} (\mathbb{H}^n\setminus B)$ as the collection of $C^{k,\alpha}_{\mathrm{loc}}(\mathbb{H}^n\setminus B)$ functions $f$ on $\mathbb{H}^n\setminus B$ that satisfy
\begin{align*}
	\| f \|_{C^{k,\alpha}_{-q}(\mathbb{H}^n\setminus B)} := \sum_{\ell =0, 1,\dots, k}\sup_{x\in \mathbb{H}^n\setminus B} |x|^q |\mathring{\nabla}^\ell f(x)|_b +\sup_{x\in \mathbb{H}^n\setminus B} |x|^{q} [\mathring{\nabla}^k f]_{\alpha; B_1(x)} <\infty,
\end{align*}
where $\mathring{\nabla}$ is the covariant derivative with respect to $b$,  
\[
	[\mathring{\nabla}^k f]_{\alpha; B_1(x)} =\sup_{1\le i_1, \dots, i_k \le n} \sup_{y\neq z\in B_1(x) } \frac{|e_{i_1} \cdots e_{i_k} (f)(y) - e_{i_1} \cdots e_{ik} (f) (z) |}{(d_b(y,z))^\alpha},
\]
and $B_1(x)$ is the unit ball centered at $x$ intersecting with $\mathbb{H}^n\setminus B$. We extend the definition to tensors of arbitrary types:  a tensor $h\in C^{k,\alpha}_{-q}(\mathbb{H}^n \setminus B)$ if and only if each tensor component with respect to the orthonormal frame lies in $C^{k,\alpha}_{-q}(\mathbb{H}^n \setminus B)$.

Let $M$ be a smooth manifold covered by an atlas that consists of a non-compact chart $\Phi: M\setminus K \cong \mathbb{H}^n\setminus B$ and finitely many compact charts.   We define the \emph{weighted H\"older norm} $\| f \|_{C^{k,\alpha}_{-q}(M)}$ (for a function or tensor) to be the sum of the weighted norm $\| \Phi_* f\|_{C^{k,\alpha}_{-q}(\mathbb{H}^n\setminus B)} $ and the usual $C^{k,\alpha}$ norms on  compact charts. Denote by $C^{k,\alpha}_{-q}(M)$ the completion of $C^{k,\alpha}_{c}(M)$ functions with respect to the  weighted H\"older norm. We often suppress $M$ when the context is clear.  
\end{definition}
\begin{notation}
Throughout the paper, we use the notation $O^{k,\alpha}(r^{-q})$ to denote a function or tensor, that belongs to the corresponding weighted space $C^{k,\alpha}_{-q} (M)$.  We simply write $O(r^{-q})$ in place of $O^0 (r^{-q})$.
\end{notation}

We collect the following basic facts about the weighted H\"older spaces. 
\begin{lemma}\label{lemma:basic}
Let $k= 0, 1, 2, \dots$, $\alpha \in (0, 1)$, and $q, s\in \mathbb{R}$.
\begin{enumerate}
\item  $|x|^{-q}\in C^{k,\alpha}_{-q}(M\setminus K)$.
\item  $f\in C^{k,\alpha}_{-q}(M\setminus K)$ if and only if $|x|^sf \in C^{k,\alpha}_{s-q}(M\setminus K)$.
\item If $f\in C^{k,\alpha}_{-s}, g\in C^{k,\alpha}_{-q}$, then $fg\in C^{k,\alpha}_{-s-q}$ and there is a constant $C>0$ such that 
\[
	\| f g\|_{C^{k,\alpha}_{-s-q}} \le C \| f\|_{C^{k,\alpha}_{-s}} \| g \|_{C^{k,\alpha}_{-q}}. 
\]
\item  The inclusion $C^{k, \alpha}_{-s}(M) \subset C^{k,\beta}_{-s+\epsilon}(M)$ is compact for any $\epsilon>0$ and $\beta<\alpha$. 
\end{enumerate}
\end{lemma}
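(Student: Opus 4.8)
The four assertions are standard structural properties of the weighted Hölder spaces, and the common engine behind all of them is a single geometric observation about the hyperboloid model: since the frame vector $e_1 = \sqrt{1+r^2}\,\partial_r$ satisfies $e_1(\log r) = \sqrt{1+r^2}/r$, which is bounded above and below by positive constants on $\mathbb{H}^n \setminus B$, moving a bounded hyperbolic distance changes $\log |x|$ by a bounded amount. Consequently, for any fixed $x$ the radius $|y|$ is comparable to $|x|$ uniformly over $y$ in the unit ball $B_1(x)$, with comparison constants independent of $x$. I would isolate this comparability at the outset, since it lets me treat the weight $|x|^q$ as essentially constant on each unit ball and thereby reduce the weighted estimates to their unweighted, local counterparts.

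For (1), I would argue by induction on $k$ that the orthonormal-frame components of $\mathring{\nabla}^\ell(|x|^{-q})$ are bounded by $C_\ell |x|^{-q}$ for $0 \le \ell \le k$. The base step is the computation $e_1(r^{-q}) = -q\sqrt{1+r^2}\, r^{-q-1}$, which is $O(r^{-q})$ because $\sqrt{1+r^2} \sim r$; the inductive step uses that the connection coefficients of $b$ in this frame are bounded, so each further application of $\mathring{\nabla}$ preserves the rate $r^{-q}$. The Hölder seminorm is then controlled by the comparability of the weight on unit balls together with the mean value inequality applied to one additional derivative. Items (2) and (3) I would deduce from (1) and the Leibniz rule: for (3) one expands $\mathring{\nabla}^\ell(fg)$ as a sum of terms $\mathring{\nabla}^i f \otimes \mathring{\nabla}^j g$ with $i+j = \ell$, distributes the weight $r^{s+q} = r^s\cdot r^q$ across the two factors, and treats the product seminorm by the split $[uv]_\alpha \le \|u\|_\infty [v]_\alpha + [u]_\alpha \|v\|_\infty$ on each unit ball; and (2) is the special case obtained by multiplying $f$ by $|x|^s \in C^{k,\alpha}_s$ (from (1)) and by its reciprocal $|x|^{-s} \in C^{k,\alpha}_{-s}$.

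The substantive assertion is the compactness in (4), and this is where I expect the real work. Given a bounded sequence in $C^{k,\alpha}_{-s}$, I would first extract, via the classical Arzelà–Ascoli theorem for Hölder spaces (which is compact precisely because $\beta < \alpha$), a subsequence converging in $C^{k,\beta}$ on each member of a compact exhaustion of $M$, and then diagonalize to obtain a single locally convergent subsequence. To upgrade this to convergence in the \emph{weighted} norm $C^{k,\beta}_{-s+\epsilon}$, I would split the domain at a large radius $R$. On the outer region $\{|x| > R\}$ the improved weight gives the uniform tail bound
\[
\sup_{|x|>R} |x|^{s-\epsilon}\, |\mathring{\nabla}^\ell f_m|_b \le R^{-\epsilon}\, \|f_m\|_{C^{k,\alpha}_{-s}},
\]
which is small uniformly in $m$ once $R$ is large, with the analogous bound for the seminorm obtained by interpolating the $\beta$-seminorm between the sup norm and the $\alpha$-seminorm on unit balls. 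On the inner compact region the local convergence already established controls the difference.

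The two estimates combine by a standard $\epsilon/3$ argument to show that the diagonal subsequence is Cauchy in $C^{k,\beta}_{-s+\epsilon}$. The main obstacle is precisely this last step: both hypotheses $\epsilon > 0$ and $\beta < \alpha$ are genuinely needed, as the decay improvement controls the loss of compactness at infinity while the drop in Hölder exponent supplies the local compactness, and one must verify that the interpolation estimate and the tail decay can be combined uniformly over the sequence.
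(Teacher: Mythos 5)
Your proposal is correct and takes essentially the same approach as the paper: parts (1)--(3) are treated there as following directly from the definition (your weight-comparability and Leibniz-rule details are just the routine verification intended), and for part (4) both proofs combine local Arzel\`a--Ascoli compactness (using $\beta<\alpha$) with a diagonal subsequence and the uniform tail estimate of order $r^{-\epsilon}$ coming from the improved weight. The only cosmetic difference is that you conclude by showing the subsequence is Cauchy in $C^{k,\beta}_{-s+\epsilon}$ and invoking completeness, whereas the paper first identifies the local limit $u$, checks $\| u \|_{C^{k,\beta}_{-s}}\le 1$, and then proves convergence to $u$ directly.
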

\begin{proof}
The first three statements follow directly from the definition. The last statement is standard compact embedding for weighted norms. While similar statements can be found in \cite[Lemma 3.6]{Lee:2006} and \cite[Proposition 8]{Delay:1997hy}, we include the proof for completeness as the weighted norms are defined with slight variations in the literature. Let $\{ u_i\}$ be a sequence of functions in $C^{k,\alpha}_{-s}$ with $\| u_i \|_{C^{k,\alpha}_{-s} }=1$. Applying Arzela-Ascoli on a sequence of compact sets that exhaust $M$ and by a diagonal sequence argument, there is a subsequence of $\{ u_i \}$ (which we still denote by $\{ u_i\}$, without loss of generality) and a function $u\in C^{k,\alpha}_{\mathrm{loc}}$ so that $u_i$ converges to $u$ locally uniformly in $C^{k,\beta}$. That is, for $\epsilon>0$ and a compact subset $\Omega$, there is an integer $I$ (depending on $\epsilon$ and $\Omega$) such that $\| u - u_i \|_{C^{k,\beta}(\Omega)}< \epsilon$ for all $i\ge I$. In fact, $u\in C^{k,\beta}_{-s}$ because, for each compact set $\Omega$, 
\[
	\| u \|_{C^{k,\beta}_{-s} (\Omega)}  = \lim_{i\to \infty} \| u_i \|_{C^{k,\beta}_{-s}(\Omega)} \le 1.
\]	
Let $B_r$ be the coordinate ball of radius $r$.  Using $\| u_i -u\|_{C^{k,\beta}_{-s+\epsilon}(M\setminus B_r)}\le r^{-\epsilon} (\| u_i \|_{C^{k,\beta}_{-s}(M)} + \| u \|_{C^{k,\beta}_{-s}(M)} ) $, we have that $u_i $ converges to $u$ in $C^{k,\beta}_{-s+\epsilon}(M)$. 
\end{proof}

\begin{definition}\label{definition:AH}
Let $n\ge 3$ and $q\in \big(\frac{n}{2},  n\big)$. Let $M$ be an $n$-dimensional, connected, complete manifold without boundary endowed with a Riemannian metric $g \in C^\infty_{\mathrm{loc}}$. We say that $(M, g)$ is \emph{asymptotically hyperbolic} (of order $q$) if the following holds:
\begin{enumerate}
\item There exists a diffeomorphism $M\setminus K \cong  \mathbb{H}^n\setminus B$ for some compact subset $K\subset M$. We call the  induced coordinate chart as \emph{the chart at infinity}. 
\item  With respect to the chart at infinity, $g -b \in C^{2,\alpha}_{-q} (M\setminus K)$.
 \item The scalar curvature satisfies $R_g+n(n-1) \in C^{0,\alpha}_{-n-\epsilon} (M)$ for some $\epsilon>0$. \label{item:scalar}
\end{enumerate}
\end{definition}

\begin{remark}
By direct computation, the assumption (2)  implies that the Ricci curvature of $g$ satisfies $\mathrm{Ric}_g = -(n-1)g + O^{0,\alpha}(r^{-q})$.
\end{remark}

\begin{remark}\label{remark:smooth}
Note the assumption $g \in C^\infty_{\mathrm{loc}}$. We add this technical assumption to employ elliptic interior regularity for distribution solutions. Namely, if a distribution solution $u$ weakly solves $a_{ij} \partial^2_{ij} + b_i \partial_i u + cu = f$ and if $f\in C^{k-2,\alpha}_{\mathrm{loc}}$, then $u\in C^{k,\alpha}_{\mathrm{loc}}$, provided that the coefficients $a_{ij}, b_i, c$ are locally smooth. This elliptic regularity  is only used in the proofs of Theorem~\ref{theorem:surjectivity} and Theorem~\ref{theorem:mass-rigidity}. If the regularity statement holds for coefficients that are just H\"older regular, then that technical assumption may be dropped.
\end{remark}

To compare Definition~\ref{definition:AH} with various notions of asymptotically hyperbolic manifolds in the existing literature, we express the assumption (2) in Definition~\ref{definition:AH} in coordinates. It appears that our asymptotic assumption is more general than \eqref{equation:conformal-compact}.

\begin{lemma}
A $(0,2)$-tensor  $g$ satisfies  $g -b \in C^{2,\alpha}_{-q} (M\setminus K)$ if and only if the tensor components have the following asymptotics in spherical coordinates:
\begin{align*}
	g= \bigg(\frac{1}{1+r^2} + O^{2,\alpha}(r^{-2-q}) \bigg)\, dr^2+  O^{2,\alpha}(r^{-q}) dr d\theta_j + (r^2 h_{j\ell} + O^{2,\alpha}(r^{2-q}) ) d\theta_j d\theta_\ell \quad \mbox{ as $r\to \infty$}.
\end{align*}
By changing the coordinate $r= \frac{1}{\sinh \rho}$, we can  express $g$ as 
\[
	g = \frac{1}{(\sinh \rho)^2}\left[ (1+O(\rho^q ) )d\rho^2+  O(\rho^q) d\rho d\theta_i + (h_{j\ell} + O(\rho^q)) d\theta_j d\theta_\ell \right]  \quad \mbox{ as $\rho \to 0$},
\]
where we slightly abuse the $O$-notation in the previous expression and write  $u =O(\rho^q )$ if $\frac{u}{\rho^q}$ is bounded as $\rho \to 0$.
\end{lemma}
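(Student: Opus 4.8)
The plan is to convert between orthonormal-frame components, which is how the weighted H\"older norm on $C^{2,\alpha}_{-q}$ is measured, and the coordinate components appearing in the statement. Let $\{\omega^1, \dots, \omega^n\}$ denote the coframe dual to the orthonormal frame \eqref{equation:orthonormal-frame}. Inverting \eqref{equation:orthonormal-frame} gives $\omega^1 = (1+r^2)^{-1/2}\, dr$ and $\omega^a = r\,\bar\omega^{a}$ for $a = 2, \dots, n$, where $\{\bar\omega^2, \dots, \bar\omega^n\}$ is the orthonormal coframe of the round metric $h$ (so $\bar\omega^2 = d\theta_1$, $\bar\omega^3 = \sin\theta_1\, d\theta_2$, and so on). In particular $b = (\omega^1)^2 + \sum_{a\ge 2}(\omega^a)^2$, and each $\bar\omega^a$ is a product of the smooth bounded angular functions $\sin\theta_i$, whose derivatives along \eqref{equation:orthonormal-frame} remain bounded because each angular frame vector carries a compensating factor $r^{-1}$.

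First I would prove the forward direction. Writing $g - b = \sum_{i,j}(g-b)_{ij}\,\omega^i\otimes\omega^j$ with $(g-b)_{ij} = (g-b)(e_i, e_j) \in C^{2,\alpha}_{-q}$ by assumption, I substitute the coframe formulas and collect the coefficients of $dr^2$, $dr\,d\theta_j$, and $d\theta_j\,d\theta_\ell$. The $dr^2$ coefficient is $(1+r^2)^{-1}(g-b)_{11}$; since $(1+r^2)^{-1} \in C^{2,\alpha}_{-2}$, Lemma~\ref{lemma:basic}(3) places this in $C^{2,\alpha}_{-2-q}$, giving the $O^{2,\alpha}(r^{-2-q})$ term. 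The mixed coefficient is $(1+r^2)^{-1/2}\, r\, (g-b)_{1a}$ times bounded angular functions; as $(1+r^2)^{-1/2} r \in C^{2,\alpha}_0$, this is $O^{2,\alpha}(r^{-q})$. The angular coefficient is $r^2$ times a bounded angular matrix contracted with $(g-b)_{ab}$, which by Lemma~\ref{lemma:basic}(2) lies in $C^{2,\alpha}_{2-q}$, i.e.\ $O^{2,\alpha}(r^{2-q})$; together with the principal part $r^2 h_{j\ell}$ coming from $b$, this gives the stated form. The converse is identical run backwards: the vielbein relating the two bases is invertible, consisting only of the scalars $(1+r^2)^{\pm 1/2}$, $r^{\pm 1}$ and a smoothly invertible bounded angular matrix, so solving for $(g-b)_{ij}$ and applying Lemma~\ref{lemma:basic}(2)--(3) returns each orthonormal component to $C^{2,\alpha}_{-q}$.

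For the second assertion I would substitute $r = 1/\sinh\rho$. A short computation gives $dr = -\cosh\rho\,(\sinh\rho)^{-2}\, d\rho$ and $(1+r^2)^{-1} = \tanh^2\rho$, whence the principal part becomes $b = (\sinh\rho)^{-2}(d\rho^2 + h)$, matching \eqref{equation:conformal-compact}. Since $\sinh\rho \sim \rho$ and hence $r \sim \rho^{-1}$ as $\rho \to 0$, each error term transforms by the corresponding power of $\rho$: extracting the common factor $(\sinh\rho)^{-2}\sim\rho^{-2}$ converts $O(r^{-2-q})\,dr^2$, $O(r^{-q})\,dr\,d\theta_j$, and $O(r^{2-q})\,d\theta_j\,d\theta_\ell$ into $(\sinh\rho)^{-2}$ times $O(\rho^q)\,d\rho^2$, $O(\rho^q)\,d\rho\,d\theta_j$, and $O(\rho^q)\,d\theta_j\,d\theta_\ell$ respectively, which is the stated form with the indicated pointwise abuse of $O$-notation.

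The step I expect to require the most care is the interaction of the angular vielbein with the weighted norms: one must check that multiplying by the bounded but angle-dependent functions $\bar\omega^a$ and their inverse matrix does not degrade the weight or the H\"older seminorm. This reduces to the observation that these functions are smooth and that their derivatives taken along \eqref{equation:orthonormal-frame} stay bounded uniformly in $r$, so that Lemma~\ref{lemma:basic}(3) applies with the angular factors treated as elements of $C^{2,\alpha}_0$.
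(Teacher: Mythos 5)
Your proposal is correct and takes essentially the same route as the paper: both proofs convert between the orthonormal-frame components (which define the weighted norm, per the paper's definition) and the spherical-coordinate components via the scaling factors $(1+r^2)^{\pm 1/2}$, $r^{\pm 1}$ and bounded angular factors, using Lemma~\ref{lemma:basic}(2)--(3) to track the weights. The paper's version is terser---it simply evaluates $\kappa(e_i,e_j)$ on the frame and suppresses the angular (sine) factors and the coordinate change $r = 1/\sinh\rho$ altogether, both of which you carry out explicitly and correctly.
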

\begin{proof}
Via the diffeomorphism on the chart at infinity, it suffices to prove the result for tensors defined on $\mathbb{H}^n\setminus B$. Express $g$ in the spherical coordinates as follows:
\begin{align} \label{equation:spherical}
	g = A\,dr^2 + 2\sum_j B_j \,dr \,d\theta_j +\sum_{j,\ell} C_{j\ell}\, d\theta_j \,d\theta_\ell. 
\end{align}
By definition, $\kappa:=g-b$ belongs to $C^{k,\alpha}_{-q}(\mathbb{H}^n \setminus B)$ if and only if each tensor component $\kappa(e_i, e_j)\in C^{k,\alpha}_{-q}(\mathbb{H}^n \setminus B)$. By \eqref{equation:orthonormal-frame} and  \eqref{equation:spherical}, we have 
\[
	\kappa(e_1, e_1) = (1+r^2) A, \quad \kappa(e_1, e_{j+1}) = \sqrt{1+r^2} r^{-1} B_j, \quad \mbox{ and } \quad \kappa(e_{j+1}, e_{\ell+1}) = r^{-2} C_{j\ell}. 
\]
Thus,  $\kappa\in C^{k,\alpha}_{-q}(\mathbb{H}^n \setminus B)$ if and only if the tensor components satisfy
\[
	A\in C^{k,\alpha}_{-2-q}, \quad B_j\in C^{k,\alpha}_{-q}, \quad \mbox{ and }\quad  C_{j\ell} \in C^{k,\alpha}_{2-q}. 
\]

\end{proof}

\subsection{Wang-Chru\'sciel-Herzlich mass, and an alternative definition}
X. Wang~\cite{Wang.X:2001} defined the mass  for asymptotically hyperbolic manifolds that are conformally compact. For the class of asymptotically hyperbolic manifolds adopted in the current paper, we use the following more general definition of P. Chru\'sciel and M.~Herzlich~\cite{Chrusciel-Herzlich:2003}.

\begin{definition}\label{def:massfunctional} 
Let $(M,g)$ be an asymptotically hyperbolic manifold.  Given a function $V\in C^1(M\setminus K)$, we define the mass integral  
\begin{align}\label{equation:mass}
	H_g(V) = \lim_{r\rightarrow\infty}\int_{S_r} \left[V\big(\mathring{\mathrm{div}}\, h -d (\mathring{\mathrm{tr}} \, h) \big)(\nu_0) + (\mathring{\mathrm{tr}} \,h ) dV (\nu_0) - h(\mathring{\nabla} V, \nu_0)\right] \, d\sigma_b,
\end{align}
where $h = g-b$, $\nu_0$ is the outward unit normal vector to $S_r = \{ |x|=r\}$,  and $\mathring{\mathrm{div}}, \mathring{\mathrm{tr}}, \mathring{\nabla}$, are all with respect to $b$. The volume form $d\sigma_b$ is the restriction of the volume form of $b$ on $S_r$. The \emph{mass of Wang-Chru\'sciel-Herzlich}  is defined by
\begin{align*}
	p_0(g) = H_g(\sqrt{1+r^2})\quad  \mbox{ and } \quad p_i(g) = H_g(x_i) \mbox{ for }  i =1, \dots, n.
\end{align*}
We may omit $g$ and simply write the mass $(p_0, p_1,\dots, p_n)$ when the context is clear. 

\end{definition}

\begin{remark}\label{remark:mass}
In the above definition, we can replace the functions $\sqrt{1+r^2}$  and $x_i$ by  $\sqrt{1+r^2} + O^2 (r^{1-q})$ and $x_i + O^2(r^{1-q})$ respectively, since the differences in the corresponding mass integrals go to zero in the limit. For the same reason, we may also replace $\nu_0, \mathring{\mathrm{div}}, \mathring{\mathrm{tr}}, \mathring{\nabla}$, and $d\sigma_b$ in \eqref{equation:mass} by the corresponding objects with respect to another asymptotically hyperbolic metric and still obtain the same limit.
\end{remark}

\begin{remark}\label{remark:static}
The quantity $(p_0, p_1, \dots, p_n)$ is a geometric invariant among an appropriate class of  charts at infinity (see \cite{Chrusciel-Herzlich:2003}, also \cite{Herzlich:2005}). We denote the functions  appearing in the above definition by 
\[
V_0= \sqrt{1+r^2} \quad \mbox{ and } \quad V_i=  x_i \quad \mbox{ for } i=1, \dots, n.
\]
In $\mathbb{H}^n$, these functions satisfy the differential equation $\mathring{\nabla}^2 V_i = V_i b$, for $i =0, 1, \dots, n$. They are the so-called  \emph{static potentials}. We will discuss general properties of static potentials in an asymptotically hyperbolic manifold in Section~\ref{section:static}. 
\end{remark}

We recall an equivalent definition of mass, which will be used in the proof of the main theorem.  This formula is  known to the experts and is stated in  \cite[Theorem 3.3]{Herzlich:2016}, whose proof is similar to the analogous formula for asymptotically flat manifolds.  
\begin{proposition}\label{proposition:mass}
Let $(M, g)$ be an asymptotically hyperbolic manifold. If $V\in C^2(M\setminus K)$ satisfies
\[
	\mathring{\nabla}^2 V = Vb,
\] 
then
\[
	\lim_{r\to \infty} \int_{S_r} (\mathrm{Ric}_g + (n-1)g)(\mathring{\nabla} V,  \nu_0) \, d\sigma_b = -\tfrac{n-2}{2} H(V),
\]
provided the quantity on either side of the equation converges. 
\end{proposition}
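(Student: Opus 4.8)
The plan is to recognize the tensor $\mathrm{Ric}_g+(n-1)g$ as a divergence-free part plus a fast-decaying scalar part, and to exploit the static equation $\mathring{\nabla}^2 V = Vb$ together with the contracted second Bianchi identity. Set $\Lambda = -\tfrac{(n-1)(n-2)}{2}$ and let $\mathsf{G} = \mathrm{Ric}_g - \tfrac12 R_g\,g + \Lambda g$ be the Einstein tensor with cosmological constant, which is divergence-free, $\mathring{\mathrm{div}}$ aside, as $\mathrm{div}_g\mathsf{G}=0$. A direct trace computation gives $\mathrm{tr}_g\mathsf{G} = -\tfrac{n-2}{2}(R_g+n(n-1))$ and the pointwise decomposition
\[
\mathrm{Ric}_g+(n-1)g = \mathsf{G} + \tfrac12\big(R_g+n(n-1)\big)g .
\]
Since $R_g+n(n-1)\in C^{0,\alpha}_{-n-\epsilon}$ while $g(\mathring{\nabla} V,\nu_0)=O(r)$ and $d\sigma_b=O(r^{n-1})$, the scalar piece contributes $O(r^{-\epsilon})\to0$ to the surface integral, so it suffices to analyze the flux of $\mathsf{G}$.

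Next I would pass from the $b$-objects $\mathring{\nabla} V, \nu_0, d\sigma_b$ to the $g$-objects $\nabla_g V, \nu_g, d\sigma_g$. Because $\mathsf{G}=O(r^{-q})$ by the Remark following Definition~\ref{definition:AH}, and each replacement error has the form $O(r^{-q})\cdot O(r)\cdot O(r^{n-1})=O(r^{n-2q})$, all such errors vanish in the limit thanks to $q>\tfrac n2$; hence $\lim_{r}\int_{S_r}\mathsf{G}(\mathring{\nabla} V,\nu_0)\,d\sigma_b=\lim_{r}\int_{S_r}\mathsf{G}(\nabla_g V,\nu_g)\,d\sigma_g$. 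Now the divergence-free property yields the clean identity $\mathrm{div}_g(\mathsf{G}(\nabla_g V,\cdot)) = \langle \mathsf{G}, \mathrm{Hess}_g V\rangle_g$. Using the static equation together with $\mathrm{Hess}_g V = Vb + O(r^{1-q}) = Vg+O(r^{1-q})$, this becomes $\langle \mathsf{G}, \mathrm{Hess}_g V\rangle_g = V\,\mathrm{tr}_g\mathsf{G} + O(r^{1-2q}) = -\tfrac{n-2}{2}V(R_g+n(n-1)) + O(r^{1-2q})$, whose integral over the exterior region converges (the principal term like $\int r^{1-n-\epsilon}r^{n-2}\,dr$, the error like $\int r^{1-2q}r^{n-2}\,dr$, both convergent for $q>\tfrac n2$). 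Integrating over annuli and applying the Cauchy criterion shows that the flux of $\mathsf{G}$ converges and equals $-\tfrac{n-2}{2}\int V(R_g+n(n-1))$ up to the inner-boundary flux.

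It remains to identify this with $-\tfrac{n-2}{2}H(V)$. Here I would invoke the Chru\'sciel--Herzlich divergence identity $\mathring{\mathrm{div}}_b\,\mathbb{U}(V,h) = V\,DR_b(h) - \langle DR_b^* V, h\rangle_b$, where $\mathbb{U}(V,h)$ is the $1$-form whose normal component is the integrand of \eqref{equation:mass}, and $DR_b, DR_b^*$ are the linearized scalar curvature and its formal adjoint. The static equation forces $DR_b^* V = \mathring{\nabla}^2 V - (\Delta_b V)b - V\,\mathrm{Ric}_b = 0$, so $\mathring{\mathrm{div}}_b\,\mathbb{U}(V,h)=V\,DR_b(h)$; expanding $R_g+n(n-1)=DR_b(h)+\mathcal{Q}$ with quadratic remainder $\mathcal{Q}=O(r^{-2q})$ and integrating expresses $H(V)$ through the \emph{same} bulk integral $\int V(R_g+n(n-1))$. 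Matching the two expressions, the factor $-\tfrac{n-2}{2}$ emerges.

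\textbf{The main obstacle} is this last matching: controlling the lower-order discrepancies between the $g$- and $b$-geometries (gradients, unit normals, Hessians, volume and area forms) and the inner-boundary contributions, and verifying their mutual cancellation so that only the flux at infinity survives. The cleanest way to organize it, paralleling the asymptotically flat computation, is to prove a pointwise identity on each $S_r$ of the form $\mathsf{G}(\nabla_g V,\nu_g)\,d\sigma_g + \tfrac{n-2}{2}\,\mathbb{U}(V,h)(\nu_0)\,d\sigma_b = \operatorname{div}_{S_r}\Xi + O(r^{n-2q-1})$ with $\Xi$ tangent to $S_r$; integrating over the closed sphere kills $\operatorname{div}_{S_r}\Xi$, and sending $r\to\infty$ gives the claim. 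Every decay budget used here is exactly the hypothesis $q\in(\tfrac n2,n)$ together with $R_g+n(n-1)\in C^{0,\alpha}_{-n-\epsilon}$.
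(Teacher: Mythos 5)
Your overall architecture is sound and, in fact, coincides with the argument the paper points to: the paper gives no proof of Proposition~\ref{proposition:mass}, citing instead \cite[Theorem 3.3]{Herzlich:2016}, and Herzlich's proof is exactly your scheme of splitting $\mathrm{Ric}_g+(n-1)g$ into the divergence-free tensor $\mathsf{G}=\mathrm{Ric}_g-\tfrac12R_g\,g-\tfrac{(n-1)(n-2)}{2}g$ plus the fast-decaying scalar piece $\tfrac12\bigl(R_g+n(n-1)\bigr)g$, and comparing the flux of $\mathsf{G}$ with the Chru\'sciel--Herzlich integrand. Your supporting computations are correct: the trace identity $\mathrm{tr}_g\mathsf{G}=-\tfrac{n-2}{2}\bigl(R_g+n(n-1)\bigr)$, the decay bookkeeping using $q>\tfrac n2$ and $R_g+n(n-1)\in C^{0,\alpha}_{-n-\epsilon}$, the identity $\mathrm{div}_g\bigl(\mathsf{G}(\nabla_g V,\cdot)\bigr)=\langle\mathsf{G},\mathrm{Hess}_g V\rangle_g$, and the vanishing $L_b^*V=0$ for a static potential of $b$.

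There is, however, a genuine gap, and you have flagged it yourself. The two divergence identities you establish cannot be combined as written: each one expresses a flux at infinity as an inner-boundary flux at finite radius $r_0$ plus a bulk integral, but the inner-boundary fluxes $\int_{S_{r_0}}\mathsf{G}(\nabla_gV,\nu_g)\,d\sigma_g$ and $\int_{S_{r_0}}\mathbb{U}(V,h)(\nu_0)\,d\sigma_b$ are unrelated quantities, and the two bulk remainders, $\langle\mathsf{G},\mathrm{Hess}_gV-Vg\rangle_g$ on one side and $-V\mathcal{Q}(h)$ on the other, are different tensorial expressions with no reason to cancel after multiplying by $-\tfrac{n-2}{2}$. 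So ``matching the two expressions'' does not follow from anything you have proven, and the factor $-\tfrac{n-2}{2}$ does not yet emerge. What is actually required is precisely the pointwise statement you defer to your last paragraph: writing $\mathsf{G}(g)=D\mathsf{G}_b(h)+O(r^{-2q})$ (using $\mathsf{G}(b)=0$), one must show by a direct computation with $h=g-b$ that $D\mathsf{G}_b(h)(\mathring{\nabla}V,\nu_0)\,d\sigma_b+\tfrac{n-2}{2}\,\mathbb{U}(V,h)(\nu_0)\,d\sigma_b$ is a tangential divergence on $S_r$ plus terms whose integrals vanish as $r\to\infty$ (and the decay exponent you assign to that error needs care: as a density relative to $d\sigma_b$ it must be $o(r^{1-n})$, not merely $O(r^{n-2q-1})$). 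That linear-algebra-plus-integration-by-parts identity, which uses the static equation $\mathring{\nabla}^2V=Vb$ in an essential way, is the entire computational content of the proposition and of Herzlich's proof; you assert the form it should take but do not derive it. As it stands the proposal is a correct outline with the one non-routine step missing.
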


\subsection{Operators asymptotic to $\Delta -n$}

To analyze the scalar curvature operator on an asymptotically hyperbolic manifold, the following class of operators naturally appears. 
\begin{definition}
Let $(M, g)$ be asymptotically hyperbolic. Let $\Delta$ be the Laplace-Beltrami operator of~$g$, which is the trace of the covariant Hessian. For $k\ge 2$, we say that the differential operator $T: C^{k,\alpha}_{-s} \to C^{k-2,\alpha}_{-s}$ defined by $Tu = \Delta u + \xi \cdot \nabla u + \eta u$  is \emph{asymptotic to $\Delta - n$} if there is a number $\epsilon>0$ such that the vector field $\xi\in C^{k-2,\alpha}_{-\epsilon}$ and the function $\eta + n \in C^{k-2,\alpha}_{-\epsilon}$.   
\end{definition}

We recall the following classical result on isomorphism. 
\begin{lemma}\label{lemma:isomorphism}
Let $(M, g)$ be an $n$-dimensional asymptotically hyperbolic manifold and $s\in (-1, n)$. The operator $T_0 : C^{k,\alpha}_{-s}(M)\to C^{k-2,\alpha}_{-s}(M)$ defined by $T_0  u = \Delta u - n u$ is an isomorphism. 
\end{lemma}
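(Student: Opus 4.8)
The plan is to treat $T_0 = \Delta - n$ as a formally self-adjoint, uniformly elliptic operator that is asymptotic to its hyperbolic model $\Delta_b - n$ on $\mathbb{H}^n\setminus B$, and to read off the admissible weights from an indicial root computation. Once the correct weight interval is identified, establishing the isomorphism amounts to proving injectivity and surjectivity on $C^{k,\alpha}_{-s}$ for $s\in(-1,n)$.

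First I would locate the indicial roots. Writing the model Laplacian of $b$ in the radial variable and testing against $r^{-\lambda}$ gives
\[
  (\Delta_b - n)(r^{-\lambda}) = (\lambda-n)(\lambda+1)\, r^{-\lambda} + O(r^{-\lambda-2}),
\]
so the indicial polynomial is $(\lambda-n)(\lambda+1)$ with roots $\lambda=-1$ and $\lambda=n$. Because the angular Laplacian carries an extra factor $r^{-2}$ in $b$, the contribution of every spherical mode is of lower order, so these two are the indicial roots for all modes; hence the open interval $(-1,n)$ contains no indicial root, and it also contains the $L^2$-threshold weight $\tfrac{n-1}{2}$. The key consequence is the strict sign $(s-n)(s+1)<0$ for every $s\in(-1,n)$, which makes $r^{-s}$ a genuine barrier for $T_0$: on the end one has $(\Delta-n)(r^{-s}) \le -c_0\, r^{-s}$ for some $c_0>0$ and all large $r$, the decaying perturbation $g-b$ being absorbed into the lower-order remainder.

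For injectivity, suppose $T_0 u=0$ with $u\in C^{k,\alpha}_{-s}$, $s\in(-1,n)$. Since there is no indicial root in $(s,n)$, the standard asymptotic analysis upgrades the decay of $u$ to $O(r^{-s'})$ for every $s'<n$; in particular $u$ tends to zero at infinity. As the zeroth-order coefficient of $T_0$ is $-n<0$, the maximum principle forbids a positive interior maximum and a negative interior minimum, so $u\equiv0$. (Equivalently, the upgraded decay makes $u$ and $\nabla u$ square-integrable, and integration by parts gives $0=\langle(\Delta-n)u,u\rangle=-\|\nabla u\|_{L^2}^2-n\|u\|_{L^2}^2$, forcing $u\equiv0$; this energy form is metric-independent and needs no spectral input.)

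The main work, and the step I expect to be the principal obstacle, is surjectivity with the correct weighted control. I would exhaust $M$ by coordinate balls $B_R$ and, for given $f\in C^{k-2,\alpha}_{-s}$, solve the Dirichlet problems $T_0 u_R=f$ on $B_R$ with $u_R=0$ on $\partial B_R$; these are uniquely solvable because the sign $-n<0$ of the zeroth-order term validates the maximum principle on bounded domains. The crux is a uniform-in-$R$ estimate $\|u_R\|_{C^{k,\alpha}_{-s}(B_R)}\le C\|f\|_{C^{k-2,\alpha}_{-s}}$, which I would obtain by comparing $u_R$ with the barrier $w=C\,r^{-s}$ of the previous paragraph on the end (handling the compact core by ordinary Schauder theory) and then upgrading the resulting weighted $C^0$ bound to the full norm by interior Schauder estimates; the strict indicial sign is exactly what lets $w$ dominate $u_R$ up to the outer boundary. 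Passing $R\to\infty$ and using interior regularity together with the compact embedding of Lemma~\ref{lemma:basic} produces a limit $u\in C^{k,\alpha}_{-s}$ with $T_0u=f$. I would favor this exhaustion–barrier route over the formally shorter argument via self-adjointness and Fredholm index, precisely because the dual of $C^{k-2,\alpha}_{-s}$ is awkward to describe, so identifying the cokernel with a kernel of $T_0^*$ in a convenient space is not immediate. Finally, injectivity together with surjectivity (or with index zero, which follows from the reflection $s\mapsto (n-1)-s$ of weights about the self-dual value $\tfrac{n-1}{2}$) yields that $T_0$ is the claimed isomorphism.
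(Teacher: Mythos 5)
Your argument is correct in outline, but it takes a genuinely different route from the paper, for the simple reason that the paper gives no self-contained proof at all: it quotes the isomorphism from \cite[Proposition 3.3]{Lee:1995} (which rests on \cite[Section 3]{Graham-Lee:1991}; see also \cite{Lee:2006}) and asserts that the proof adapts to its class of asymptotically hyperbolic metrics. Your route --- the indicial computation $(\Delta_b-n)(r^{-\lambda})=(\lambda-n)(\lambda+1)r^{-\lambda}+O(r^{-\lambda-2})$ (which checks out, since $\Delta_b r^{-\lambda}=-\lambda(n-1-\lambda)r^{-\lambda}+O(r^{-\lambda-2})$), the use of $r^{-s}$ as a strict supersolution for $s\in(-1,n)$ with the perturbation $g-b\in C^{2,\alpha}_{-q}$ absorbed into lower order, injectivity via decay improvement plus the maximum principle, and surjectivity via Dirichlet exhaustion with uniform weighted barriers --- is the elementary sub/supersolution method that underlies the maximum-principle parts of \cite{Graham-Lee:1991}, as opposed to the parametrix/Fredholm machinery developed in \cite{Lee:2006}. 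The citation buys the paper brevity and a theory covering a whole class of operators; your argument buys a self-contained proof that exploits the one feature special to $\Delta-n$, namely the negative zeroth-order coefficient, which is exactly the operator the lemma concerns. Your avoidance of duality arguments is also well judged, since (as the paper itself emphasizes elsewhere) the dual of a weighted H\"older space is awkward.

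One step is stated too loosely, though it is repairable by tools you already invoke. In the uniform estimate for $u_R$, the barrier $w=Cr^{-s}$ on the annulus $\{A\le r\le R\}$ dominates $\pm u_R$ only after you control $\sup_{\partial B_A}|u_R|$ uniformly in $R$, and ``ordinary Schauder theory'' on the compact core cannot supply that: Schauder estimates bound higher norms by the sup norm, not the sup norm itself. The inner bound must come from the maximum principle, and there the sign of $s$ matters. For $s\in[0,n)$ one has $\sup_{B_R}|f|\le C\|f\|_{C^{0}_{-s}}$, so the global bound $\sup_{B_R}|u_R|\le\frac{1}{n}\sup_{B_R}|f|$ suffices; but for $s\in(-1,0)$ the datum $f$ may grow like $r^{|s|}$ and this bound is not uniform in $R$. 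A clean fix for $s<0$ is a single global barrier $w=\alpha(\rho^{-s}+\beta)$, with $\rho$ a positive extension of $r$ and $\beta$ a large constant: one has $L(w)\le -c\alpha\rho^{-s}$ everywhere once $\beta$ beats the core, and the constant $\beta$ is dominated by $\rho^{-s}$ precisely because $s<0$. Alternatively, a compactness-plus-injectivity contradiction argument (normalize a hypothetical unbounded sequence, extract a local limit by interior Schauder and Lemma~\ref{lemma:basic}, and contradict injectivity) handles all $s\in(-1,n)$ at once. The same caveat applies to the decay-improvement step in your injectivity proof when $s\in(-1,0]$: to run the maximum principle on the unbounded end against a solution that may grow, one adds a small multiple of a faster-growing supersolution $\theta r^{-\tilde s}$ with $\tilde s\in(-1,s)$ and lets $\theta\to 0$. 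With these standard repairs, your exhaustion argument goes through and yields the isomorphism.
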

\begin{proof}
The isomorphism result is proven  for  asymptotically hyperbolic manifolds that are conformally compact in \cite[Proposition 3.3]{Lee:1995}  (based on the argument of \cite[Section 3]{Graham-Lee:1991}; see also \cite{Lee:2006} for a general class of operators.) It is clear that the proof can be  adapted for our class of asymptotically hyperbolic manifolds. 
\end{proof}

We also need the following standard Fredholm property for our class of operators. Note similar statements under greater generality can be found in \cite{Lee:2006}, but we include a proof more specific to our setting for completeness.
\begin{proposition}\label{proposition:Fredholm}
Let $(M, g)$ be an  $n$-dimensional asymptotically hyperbolic manifold and $s\in (-1, n)$. Let $T:C^{k,\alpha}_{-s}\to C^{k-2,\alpha}_{-s}$ be asymptotic to $\Delta - n$. Then $T$ is Fredholm. 

\end{proposition}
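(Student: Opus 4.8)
The plan is to establish the Fredholm property by combining the isomorphism result for the model operator $T_0 = \Delta - n$ (Lemma~\ref{lemma:isomorphism}) with a compact perturbation argument, using the compact embedding in part (4) of Lemma~\ref{lemma:basic}. The key point is that $T$ differs from $T_0$ by the lower-order terms $\xi\cdot\nabla u + (\eta+n)u$, and by hypothesis both $\xi$ and $\eta+n$ lie in weighted spaces with \emph{negative} weight, i.e.\ they decay at infinity at rate $r^{-\epsilon}$. This extra decay is exactly what converts multiplication by these coefficients into a compact operator when composed with the inclusion into a space of slightly worse weight.

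Concretely, I would first write $T = T_0 + P$, where $Pu = \xi\cdot\nabla u + (\eta+n)u$. The main step is to show that $P: C^{k,\alpha}_{-s}\to C^{k-2,\alpha}_{-s}$ is a \emph{compact} operator. To see this, factor $P$ through a space with improved weight: by the multiplication estimate in Lemma~\ref{lemma:basic}(3), since $\xi, \eta+n \in C^{k-2,\alpha}_{-\epsilon}$ and $\nabla u, u \in C^{k-1,\alpha}_{-s}\subset C^{k-2,\alpha}_{-s}$, the product $Pu$ actually lies in $C^{k-2,\alpha}_{-s-\epsilon}$ with $\|Pu\|_{C^{k-2,\alpha}_{-s-\epsilon}} \le C\|u\|_{C^{k,\alpha}_{-s}}$. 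Thus $P$ maps $C^{k,\alpha}_{-s}$ boundedly into $C^{k-2,\alpha}_{-s-\epsilon}$, which is a space with \emph{stronger} decay than the target $C^{k-2,\alpha}_{-s}$. The inclusion $C^{k-2,\alpha}_{-s-\epsilon}=C^{k-2,\alpha}_{-(s+\epsilon)}\hookrightarrow C^{k-2,\beta}_{-s}$ is compact by Lemma~\ref{lemma:basic}(4) (taking the decay gain $\epsilon$ and any $\beta<\alpha$). Composing, $P: C^{k,\alpha}_{-s}\to C^{k-2,\alpha}_{-s}$ is the composition of a bounded map and a compact inclusion, hence compact.

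With compactness of $P$ in hand, the Fredholm conclusion is immediate: $T_0$ is an isomorphism (in particular Fredholm of index zero) by Lemma~\ref{lemma:isomorphism}, and the class of Fredholm operators is stable under compact perturbations. Therefore $T = T_0 + P$ is Fredholm.

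The step I expect to require the most care is the compactness of $P$, specifically making the factorization through the improved weight rigorous while respecting the H\"older exponents. One subtlety is that $\nabla u$ gains a derivative order but is controlled in $C^{k-1,\alpha}_{-s}$, so I must track that the product estimate is applied with matching regularity indices before invoking the compact embedding; a clean way is to bound $\|Pu\|_{C^{k-2,\alpha}_{-s-\epsilon}}$ directly and then apply Lemma~\ref{lemma:basic}(4) with a slightly smaller H\"older exponent $\beta<\alpha$, which harmlessly weakens the target H\"older regularity but still lands inside $C^{k-2,\alpha}_{-s}$ after noting that the standard a priori elliptic estimates recover the loss. If one wishes to avoid even this mild issue, an alternative is to argue directly with sequences: given a bounded sequence $\{u_i\}$ in $C^{k,\alpha}_{-s}$, the images $\{Pu_i\}$ are bounded in $C^{k-2,\alpha}_{-s-\epsilon}$, and the compact embedding of Lemma~\ref{lemma:basic}(4) extracts a convergent subsequence in $C^{k-2,\alpha}_{-s}$.
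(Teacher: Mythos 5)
Your overall strategy---writing $T = T_0 + P$ with $Pu = \xi\cdot\nabla u + (\eta+n)u$, invoking Lemma~\ref{lemma:isomorphism} for $T_0$, and reducing everything to compactness of $P$---is the same as the paper's, but your implementation of the compactness step has a genuine gap, and it sits exactly at the point you flagged as delicate. Lemma~\ref{lemma:basic}(4) gives compactness of the inclusion $C^{k-2,\alpha}_{-s-\epsilon}\hookrightarrow C^{k-2,\beta}_{-s}$ only for $\beta<\alpha$ \emph{strictly}; the inclusion with the same H\"older exponent, $C^{k-2,\alpha}_{-s-\epsilon}\hookrightarrow C^{k-2,\alpha}_{-s}$, is \emph{not} compact. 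The weight improvement only helps at infinity: for functions supported in a fixed compact set the two weighted norms are equivalent, and already on a compact set the identity map of $C^{0,\alpha}$ fails to be compact (e.g.\ $u_j = \phi\,\sin(jx_1)\,j^{-\alpha}$ with $\phi$ a fixed bump function is bounded in $C^{0,\alpha}$ and converges uniformly to $0$, yet its $C^{0,\alpha}$ seminorm stays bounded away from zero, so no subsequence converges in $C^{0,\alpha}$). Consequently your factorization only proves that $P:C^{k,\alpha}_{-s}\to C^{k-2,\beta}_{-s}$ is compact for $\beta<\alpha$, which is not what the perturbation argument needs: $P$ must be compact as an operator into the \emph{same} space $C^{k-2,\alpha}_{-s}$ on which $T_0$ is an isomorphism. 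Neither of your suggested repairs closes this. There is no elliptic estimate to invoke---$P$ is a first-order, non-elliptic operator and $Pu_i$ satisfies no equation---and interpolation cannot recover the endpoint exponent $\alpha$. Your sequential ``alternative'' simply misquotes Lemma~\ref{lemma:basic}(4): the extracted subsequence converges in $C^{k-2,\beta}_{-s}$, not in $C^{k-2,\alpha}_{-s}$.

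The fix is to apply compactness on the \emph{source} side before multiplying, which is what the paper does. From a bounded sequence $\{u_i\}$ in $C^{k,\alpha}_{-s}$, extract a subsequence converging to some $u$ in $C^{k}_{-s+\epsilon}$ (Lemma~\ref{lemma:basic}(4), where dropping the H\"older index is now harmless), and then estimate via Lemma~\ref{lemma:basic}(3):
\begin{align*}
\|P(u_i-u)\|_{C^{k-2,\alpha}_{-s}}
&\le C\Big(\|\xi\|_{C^{k-2,\alpha}_{-\epsilon}}\|\nabla(u_i-u)\|_{C^{k-2,\alpha}_{-s+\epsilon}}
+\|\eta+n\|_{C^{k-2,\alpha}_{-\epsilon}}\|u_i-u\|_{C^{k-2,\alpha}_{-s+\epsilon}}\Big)\\
&\le C\|u_i-u\|_{C^{k-1,\alpha}_{-s+\epsilon}}
\le C\|u_i-u\|_{C^{k}_{-s+\epsilon}} \to 0 .
\end{align*}
The point is that the H\"older exponent lost in the compact embedding is recovered by the spare derivative available on the source side: the $C^{k}$ norm controls the $C^{k-1,\alpha}$ norm on unit balls, and $P$ needs only $C^{k-1,\alpha}$ control of its argument to produce output controlled in $C^{k-2,\alpha}$. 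Once you have already multiplied by the decaying coefficients, that room is gone---the product has no regularity to spare beyond $C^{k-2,\alpha}$, since $\xi$ and $\eta+n$ are only $C^{k-2,\alpha}$---which is why compactness can be inserted before the multiplication but not after it.
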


\begin{proof}
We write $T u = T_0 u + \xi\cdot \nabla u+ (\eta+n) u $. Note $T_0$ is an isomorphism by Lemma~\ref{lemma:isomorphism}, and hence Fredholm. To show that $T$ is Fredholm, it suffices to show that the map $T-T_0 : C^{k,\alpha}_{-s}\to C^{k-2,\alpha}_{-s}$ is compact.

Let $\{ u_i \}$ be a sequence of functions in $C^{k,\alpha}_{-s}$ with $\| u_i \|_{C^{k,\alpha}_{-s}}=1$. We show that $\{ (T-T_0)u_i\}$ has a convergent subsequence in $C^{k-2,\alpha}_{-s}$. By Lemma~\ref{lemma:basic}, $C^{k,\alpha}_{-s} \subset C^{k}_{-s+\epsilon}$ is compact  for $\epsilon>0$,  so there is a subsequence (still denoted by $\{ u_i\}$ without loss of generality) that converges to $u$ in $C^{k}_{-s+\epsilon}$. Observe the sequence $\{ (T - T_0) u_i \}$ converges in $C^{k-2,\alpha}_{-s}$ because
\begin{align*}
	\| (T-T_0 )(u_i-u) \|_{C^{k-2,\alpha}_{-s} } &=\| \xi\cdot \nabla (u_i -u)+ (\eta + n ) (u_i-u) \|_{C^{k-2,\alpha}_{-s} } \\
	& \le C\left[ \| \xi \|_{C^{k-2,\alpha}_{-\epsilon}} \| \nabla (u_i -u) \|_{C^{k-2,\alpha}_{-s+\epsilon}} + \| \eta+n\|_{C^{k-2,\alpha}_{-\epsilon}} \| u_i -u \|_{C^{k-2,\alpha}_{-s+\epsilon}}\right] \\
	&\le C \|u_i - u \|_{C^{k-1,\alpha}_{-s+\epsilon}}\\
	& \le C \|u_i - u \|_{C^{k}_{-s+\epsilon}} \to 0 \qquad \mbox{ as } i\to \infty.
\end{align*}

\end{proof}

\section{Surjectivity of the linearized scalar curvature map} \label{section:static}

Let $(\Omega, g)$ be a  Riemannian manifold.  The linearization $L_g$ of the scalar curvature map at $g$ acts on a symmetric $(0,2)$-tensor $h\in C^2_{\mathrm{loc}}$  by the formula
\begin{align}\label{equation:linearized}
L_g h=- \Delta (\mathrm{tr}\, h) +  \mathrm{div} \, \mathrm{div} \, h - h \cdot \mathrm{Ric}_g,
\end{align}
 and the formal $L^2$-adjoint operator $L_g^*$ is given by, for a function $V\in C^2_{\mathrm{loc}}$,
\begin{align}\label{equation:adjoint}
	L_g^* V= -(\Delta V) g + \nabla^2 V- V\, \mathrm{Ric}_g.
\end{align}
  Here $\mathrm{div}$, $\mathrm{tr}$, $\cdot$, $\Delta$, and $\nabla$  are all taken with respect to $g$. 

We say that $(\Omega, g)$ is \emph{static} if it admits a function $V$,  not identically zero, that satisfies the static equation
\begin{align}\label{equation:static}
	L_g^* V=0.
\end{align} 
We call a solution $V$ to this equation a \emph{static potential}.  Equation \eqref{equation:static}  is equivalent to the following equation:
\begin{align*}
	\nabla^2 V &= \Big(\mathrm{Ric}_g - \tfrac{1}{n-1} R_g \,g\Big) V. 
\end{align*}

\begin{example}
It is well-known that a static manifold has constant scalar curvature on each connected component \cite{Fischer-Marsden:1975}, so a static asymptotically hyperbolic manifold (which is assumed to be connected in Definition~\ref{definition:AH}) must have constant scalar curvature $-n(n-1)$. Thus, \eqref{equation:static} implies 
\begin{align} \label{equation:static-2}
\begin{split}
	\nabla^2 V &= (\mathrm{Ric}_g + ng) V\\
	\Delta V&= n V. 
\end{split}
\end{align}
The prototype of a static asymptotically hyperbolic manifold is hyperbolic space. Recall in Remark~\ref{remark:static},  the space of static potentials is an $(n+1)$-dimensional real vector space spanned by the functions $\sqrt{1+r^2}, x_1, \dots, x_n$ with respect to the coordinates of the hyperboloid model. They come from the restriction of the Minkowski coordinate functions $t, x_1,\dots ,x_n$ to the hyperboloid. 
\end{example}

The goal of this section is to analyze the growth rate of $V$ solving $L_g^*V = \tau $  on an asymptotically hyperbolic manifold $(M, g)$ where $\tau\in C^0_{1-q}$. Specifically, we show  in Theorem~\ref{theorem:expansion} below that such $V$ must either grow linearly in a cone region  or go to zero at infinity. As an application (the case $\tau=0$), at the end of this section we prove Theorem~\ref{theorem:surjectivity}  that $L_g$ is surjective between the appropriate weighted H\"older spaces.  We also use Theorem~\ref{theorem:expansion} (the  case $\tau\neq 0$)  in the proof of Theorem~\ref{theorem:mass-rigidity} in the next section. 

We remark that it is possible to obtain more detailed asymptotics of $V$ (at least for the case $\tau = 0$) as discussed in \cite[Remark A.3]{Chrusciel-Delay:2018} by their analysis. Here we establish elementary properties for a class of inhomogeneous, second-order linear ODEs that suffice for our purpose.

We analyze the asymptotic behavior of a static potential, by studying the static equation along geodesic rays. Note  $\nabla^2 V = (\mathrm{Ric}_g + ng) V = \left(g + O^{0,\alpha}(r^{-q} ) \right)V$ by the asymptotically hyperbolic assumption. The corresponding equation along a geodesic ray is asymptotic to $u'' = u$.  We prove in the next three technical  lemmas that the solutions to a large class of ODEs share similar properties as the solutions to $u'' = u$, which are generated by $e^t, e^{-t}$. 

\begin{lemma}\label{lemma:positivity}
Let $P(t), Q(t) \in C^{0,\alpha}([0,\infty))$  and $Q> 0$. Consider the ODE given by
\begin{align}\label{equation:ODE0}
	u''= Pu'+Qu.
\end{align}
Then the following holds:
\begin{enumerate}
\item A solution $u$ has at most one zero, unless $u$ is identically zero.\label{item:no-zero}
\item If $u$ and $v$ are two solutions satisfying the initial condition $u(0)\ge v(0)$ and $u'(0)\ge v'(0)$, then $u(t)>v(t)$ and $u'(t)>v'(t)$ for all $t>0$, unless $u$ is identical to $v$.  \label{item:comparison}
\item There is a solution $u$ with $u(t)>0$ and $u'(t)<0$, for all $t$.   \label{item:existence}
\end{enumerate}
\end{lemma}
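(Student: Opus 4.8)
The unifying observation is that wherever a solution of \eqref{equation:ODE0} is positive the equation reads $u''=Qu>0$ at any critical point, so a point where $u>0$ and $u'=0$ is necessarily a strict local \emph{minimum}; in particular a positive solution can have no interior positive maximum. I would derive the first assertion directly from this. If a nontrivial $u$ had two zeros $t_1<t_2$, then on $(t_1,t_2)$ it would keep a constant sign, say $u>0$ (the case $u<0$ is identical by linearity of \eqref{equation:ODE0}), and would therefore attain an interior positive maximum, where $u'=0$ and $u''\le 0$; but $u''=Pu'+Qu=Qu>0$ there, a contradiction. Uniqueness for the initial value problem also guarantees $u'\neq 0$ at any zero, so zeros are simple.

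For the comparison statement I set $w=u-v$, which solves the same equation \eqref{equation:ODE0} with $w(0)\ge 0$ and $w'(0)\ge 0$, not both zero, and I claim $w>0$ and $w'>0$ on $(0,\infty)$. Tracking $w'$: if $w'$ were positive on an initial interval and first vanished at some $t_0>0$, then $w$ would be increasing up to $t_0$, giving $w(t_0)>w(0)\ge 0$; but then $w''(t_0)=Q(t_0)w(t_0)>0$ contradicts the fact that $w'$ decreases to $0$ at $t_0$. The only remaining case, $w'(0)=0$, forces $w(0)>0$ and $w''(0)=Q(0)w(0)>0$, so $w'$ becomes positive immediately and the previous argument applies. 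Hence $w'>0$ on $(0,\infty)$, so $w$ is strictly increasing with $w(t)>w(0)\ge 0$, which is exactly $u(t)>v(t)$ and $u'(t)>v'(t)$.

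The existence of a positive, strictly decreasing solution in part (3) is the main difficulty, since it concerns the global behaviour on all of $[0,\infty)$ rather than a local maximum-principle argument. I would realize such a solution as a limit of two-point boundary value problems. For each integer $m$ the homogeneous Dirichlet problem on $[0,m]$ has only the trivial solution by part (1) (a nontrivial solution cannot vanish at both endpoints), so there is a unique $u_m$ with $u_m(0)=1$ and $u_m(m)=0$. By part (1) again $u_m>0$ on $[0,m)$, and the no-positive-maximum principle forces $u_m'<0$ on $[0,m]$: an interior critical point where $u_m>0$ would be a strict local minimum, which is incompatible with $u_m(0)=1>0=u_m(m)$. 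Comparing $u_m$ and $u_{m'}$ for $m<m'$ via part (2) (their difference vanishes at $0$ and is positive at $m$, hence, having at most one zero, is positive on $(0,\infty)$, forcing a positive slope at $0$) shows the initial slopes $u_m'(0)$ are negative and strictly increasing in $m$, so they converge to some $c^*\le 0$.

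Finally I would pass to the limit. By continuous dependence on initial data the $u_m$ converge in $C^2_{\mathrm{loc}}$ to the solution $u$ with $u(0)=1$ and $u'(0)=c^*$. Pointwise monotonicity in $m$ gives $u\ge u_m>0$, so $u>0$ on $[0,\infty)$, while the bounds $u_m'<0$ pass to $u'\le 0$ in the limit. Strictness then follows once more from the convexity mechanism: if $u'(t_0)=0$ then $u''(t_0)=Q(t_0)u(t_0)>0$ would make $t_0$ a strict local minimum of $u$, contradicting $u'\le 0$; in particular $c^*<0$. Thus $u>0$ and $u'<0$ everywhere, as desired.
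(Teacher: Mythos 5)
Your proof is correct, and its skeleton --- especially for part (3) --- is the same as the paper's: both construct the decreasing positive solution as a limit of solutions $u_m$ of the two-point problems $u_m(0)=1$, $u_m(m)=0$, using parts (1) and (2) to order this family. The differences are in the local mechanisms. For (1) and (2), the paper introduces the integrating factor $K(t)=\exp\bigl(-\int_0^t P\,ds\bigr)$ and works with the monotonicity of $Ku'$ from $(Ku')'=KQu$; you instead invoke the second-derivative test at critical points, where $u'=0$ kills the $Pu'$ term and $u''=Qu>0$ rules out interior positive maxima. These are equivalent in strength; the integrating factor disposes of the first-order term globally in one stroke, while your argument is pointwise but equally elementary. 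For the limit in (3), the paper bounds higher derivatives via the ODE and applies Arzela--Ascoli with a diagonal argument, obtaining a subsequential limit; you instead observe from (2) that the initial slopes $u_m'(0)$ are negative and strictly increasing, hence converge to some $c^*\le 0$, and then use continuous dependence on initial data. Your route gives convergence of the full sequence and, as a bonus, you explicitly justify the solvability and uniqueness of each boundary value problem via part (1) (nondegeneracy of the homogeneous Dirichlet problem), a point the paper passes over silently. One cosmetic remark: when you say that a nontrivial solution with two zeros "keeps a constant sign" between them, you should, as the paper does, first pass to two \emph{adjacent} zeros (these exist because zeros of a nontrivial solution are simple, hence isolated); this is a standard repair, not a gap.
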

\begin{proof}
Let $K(t) = \exp \big(-\int_0^t P(s)\, ds\big) > 0$. Then 
\begin{align} \label{equation:ODE}
	(Ku')' = KQu.
\end{align}
To see \eqref{item:no-zero}, suppose that $u$ is not identically zero and, to give a contradiction, that $u$ has two or more zeros. Let $t_1<t_2$ be two adjacent zeros. We may without loss of generality assume that $u>0$ on $(t_1, t_2)$. This implies that $u'(t_1)\ge 0$ and $u'(t_2)\le 0$. In fact, both inequalities are strict; otherwise $u$ is identically zero by uniqueness of solutions. However, this contradicts the fact that $Ku'$ is increasing on $[t_1, t_2]$ by \eqref{equation:ODE}. For \eqref{item:comparison}, by linearity it suffices to show that if $u$ is a solution satisfying the initial condition $u(0)\ge 0$ and $u'(0)\ge 0$, then $u(t)>0$ and $u'(t)>0$ for all $t>0$, unless $u$ is identically zero. The desired statement in \eqref{item:comparison} follows from \eqref{equation:ODE} and by observing that if $u\ge 0$ then $Ku'$ is increasing.

We now prove (3) by constructing a compact family of solutions. For an integer $j>0$, let $u_j$ be the solution that satisfies $u_j(0)=1$ and $u_j(j)=0$. By \eqref{item:no-zero} and \eqref{item:comparison}, we have $0\le u_j < u_{j+1} < u_{j+2} < \cdots  <1$ and $u_j' < u_{j+1}' < u'_{j+2}< \cdots <0$ for $t\in (0, j]$. Using \eqref{equation:ODE0} to bound the  higher derivatives, we see that $u_j$ is locally uniformly bounded in $C^{2,\alpha}$. By Arzela-Ascoli, a subsequence locally uniformly converges to a solution $u$ in $C^2([0,\infty))$ that satisfies  $u(0)=1$ and $0\le u(t) \le 1, u'\le 0$ for all $t$. It is straightforward to verify that the inequalities are strict:  $u(t)>0$ and $u'(t)<0$ for all $t$. 

\end{proof}

\begin{lemma}\label{lemma:ODE-estimate}
Let $P(t), Q(t)\in C^{0,\alpha}([0, \infty))$. Suppose  $1+Q> 0$ and  that there are  constants $d, C_0>0$ such that $|P(t)|, |Q(t)| \le C_0 e^{-dt}$. Then there are two linearly independent solutions  $u_1$ and $u_2$ to the homogeneous equation
\[
u'' =P u' + (1+Q)u,
\]
and $u_1, u_2$ satisfy the following:  there is  a constant $C>0$ such that, for all $t$,
\begin{align}\label{equation:fundamental_solution}
\begin{split}
	&C^{-1} e^t \le u_1(t) \le Ce^t, \qquad  \quad C^{-1} e^t \le u_1'(t) \le Ce^t,\\
	&C^{-1} e^{-t} \le u_2(t)  \le Ce^{-t}, \qquad C^{-1} e^{-t} \le -u_2'(t)  \le Ce^{-t}.
\end{split}
\end{align}
\end{lemma}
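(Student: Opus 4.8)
The plan is to treat the equation as an exponentially small perturbation of $u''=u$, whose fundamental system is $\{e^{t},e^{-t}\}$, and to build the two solutions with the matching asymptotics. Since $1+Q>0$, the full conclusion of Lemma~\ref{lemma:positivity} applies with $\tilde Q:=1+Q$ in place of $Q$; I will use it for all sign and positivity information, and extract the sharp exponential rates by a separate integral-equation argument.

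First I would construct the decaying solution $u_2$. The point is to obtain genuine asymptotics, not merely an order of magnitude, and to control the derivative; so I would set up a contraction in the weighted $C^1$ space
\[
X_T=\Big\{u\in C^1([T,\infty)):\ \|u\|:=\sup_{t\ge T} e^{t}\big(|u(t)|+|u'(t)|\big)<\infty\Big\}
\]
for the fixed-point equation obtained by variation of parameters with the Green's function of $\tfrac{d^2}{dt^2}-1$ that selects decay:
\[
u_2(t)=e^{-t}+\tfrac12\int_t^\infty\big(e^{\,s-t}-e^{\,t-s}\big)\big(P(s)u_2'(s)+Q(s)u_2(s)\big)\,ds.
\]
Because $|P|,|Q|\le C_0e^{-ds}$, the tail integrals carry a factor $\int_T^\infty e^{-ds}\,ds$, which makes the map a contraction on $X_T$ once $T$ is large; the fixed point then satisfies $e^{t}u_2(t)\to 1$ and $-e^{t}u_2'(t)\to 1$ as $t\to\infty$, so the bounds \eqref{equation:fundamental_solution} for $u_2$ hold on $[T,\infty)$. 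I would extend $u_2$ to $[0,\infty)$ by ODE existence and uniqueness and propagate the signs through the self-adjoint form $(Ku_2')'=K(1+Q)u_2$, where $K=\exp\!\big(-\int_0^{\cdot}P\big)>0$: on the region where $u_2>0$ the function $Ku_2'$ is increasing, so starting from $Ku_2'(T)<0$ one gets $u_2'<0$, hence $u_2>0$, throughout $[0,T]$. On the compact interval $[0,T]$ continuity and positivity of $u_2$ and of $-u_2'$ give the bounds with a larger constant, and combining the two regions yields \eqref{equation:fundamental_solution} for $u_2$.

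Next I would produce the growing solution $u_1$. Rather than normalize at infinity, I take the solution with $u_1(0)=u_1'(0)=1$; Lemma~\ref{lemma:positivity}\eqref{item:comparison} gives $u_1>0$ and $u_1'>0$ on all of $[0,\infty)$. To pin down the rate I decompose $u_1=A\phi+Bu_2$ in the fundamental system $\{\phi,u_2\}$, where $\phi(t)=u_2(t)\int_0^t \frac{ds}{K(s)u_2(s)^2}$ is the reduction-of-order partner of $u_2$ and the same $K$ is bounded between positive constants because $\int_0^\infty|P|<\infty$. Using $u_2\sim e^{-t}$, $u_2'\sim-e^{-t}$, and $K\to K_\infty$, a direct computation gives $\phi\sim \tfrac{1}{2K_\infty}e^{t}$ and $\phi'\sim\tfrac{1}{2K_\infty}e^{t}$; since $u_1$ is increasing from $1$ it cannot decay, forcing $A>0$, whence $u_1\sim A\phi$ and $u_1'\sim A\phi'$ are both comparable to $e^{t}$ at infinity, while $u_1>0$, $u_1'>0$ and compactness handle $[0,T]$. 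This gives \eqref{equation:fundamental_solution} for $u_1$ and the linear independence of $u_1,u_2$.

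The main obstacle is obtaining the \emph{two-sided} estimates, including those for the derivatives, rather than one-sided or order-of-magnitude bounds. This forces honest leading-order asymptotics ($u_2=e^{-t}(1+o(1))$, $u_2'=-e^{-t}(1+o(1))$), which is why the contraction must be carried out in a $C^1$-weighted norm that controls $u'$; it also explains why I avoid the substitution $u=e^{\frac12\int P}y$ that would remove the first-order term, since that requires $P\in C^1$ whereas here $P$ is only $C^{0,\alpha}$. A secondary subtlety is the apparent cancellation in $\phi'=u_2'\!\int\frac{ds}{Ku_2^2}+\frac{1}{Ku_2}$, where both terms have size $e^{t}$ with opposite signs; tracking constants shows the sum is $\tfrac{1}{2K_\infty}e^{t}(1+o(1))$, so no cancellation occurs and the lower bound on $u_1'$ survives.
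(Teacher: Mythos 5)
Your proof is correct, but it takes a genuinely different route from the paper's. The paper never constructs solutions with prescribed behavior at infinity: it takes the solutions handed to it by Lemma~\ref{lemma:positivity} (the increasing solution from part (2), the positive decreasing one from part (3)) and extracts the rates by an elementary sum/difference trick --- setting $w=u_1+u_1'$, noting $(1-|P|-|Q|)w\le w'\le(1+|P|+|Q|)w$ and integrating using only $\int_0^\infty(|P|+|Q|)\,dt<\infty$ to get $C_1^{-1}e^t\le w\le C_1e^t$, then showing the difference $z=u_1-u_1'$ satisfies $|z'+z|\le 2C_0C_1e^{(1-d)t}$, hence $z=o(e^t)$, so that both $u_1$ and $u_1'$ inherit two-sided $e^t$ bounds; the decaying solution $u_2$ is treated symmetrically via $u_2-u_2'$ and $u_2+u_2'$. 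Your argument instead builds $u_2$ by a contraction with the Green's kernel of $\tfrac{d^2}{dt^2}-1$ in a weighted $C^1$ space, propagates its signs backward through the self-adjoint form $(Ku_2')'=K(1+Q)u_2$ (the same mechanism as inside the proof of Lemma~\ref{lemma:positivity}), and then manufactures the growing solution by reduction of order. What your approach buys: genuine leading-order asymptotics $e^tu_2\to1$, $-e^tu_2'\to1$, and $u_1\sim\tfrac{A}{2K_\infty}e^t$ with explicit $O(e^{-dt})$ errors --- strictly more than the two-sided bounds \eqref{equation:fundamental_solution} require, and close in spirit to what the paper only derives later, in Lemma~\ref{lemma:inhomogeneous}, for the inhomogeneous problem. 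What the paper's approach buys: no fixed-point machinery and no reduction-of-order bookkeeping (in particular no need to rule out the near-cancellation in $\phi'=u_2'\!\int\frac{ds}{Ku_2^2}+\frac{1}{Ku_2}$ that you rightly flag and resolve), with $u_1$ and $u_2$ handled by one two-line computation each. Both arguments correctly avoid ever differentiating $P$, consistent with $P\in C^{0,\alpha}$ only.
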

\begin{proof}
Let $u_1$ be a solution with the initial condition $u_1(0) = 1$ and $u_1'(0)>0$. By (2) in Lemma~\ref{lemma:positivity}, we have $u_1>0$ and $u_1'>0$ for all $t$.  Let $w(t) = u_1(t) + u'_1(t) $. Then $w>0$ satisfies 
\begin{align*}
	w' = (1+P)u_1'+ (1+Q)u_1.
\end{align*}  
This implies the following differential inequality for $w$:
\[
	(1-|P|-|Q|)w \le w' \le (1+|P|+|Q|) w.
\]
Integrating the inequality gives
\[
	w(0)  \exp\left( \int_0^t (1-|P(s)| - |Q(s)|)\, ds\right)\le w(t)  \le w(0) \exp\left( \int_0^t (1+ |P(s)| + |Q(s)|)\, ds\right).
\]
That is, there is a constant $C_1>0$ (depending only on  $w(0), \| P\|_{L^1}$, and $\| Q \|_{L^1}$) such that 
\begin{align} \label{equation:addition}
	C^{-1}_1 e^t \le u_1(t)+u'_1(t)\le  C_1e^{t}.
\end{align}
This gives the upper bound for $u_1, u_1'$ in \eqref{equation:fundamental_solution}. To derive the lower bound for $u_1, u_1'$, we set $z (t) = u_1(t) - u'_1(t)$. Then $z' = - z - Pu'_1 -Qu_1$ and $|z'+z|\le 2C_0C_1 e^{(1-d)t}$. 
Solving the differential inequality gives  $|z|\le C_2 ( e^{(1-d)t} + e^{-t} + te^{-t})$ for some constant $C_2>0$. For $t$ sufficiently large,  we derive $|u_1(t) - u'_1(t)| \le  \frac{1}{2} C_1^{-1} e^{t}$. Together with \eqref{equation:addition}, we obtain the desired estimate  \eqref{equation:fundamental_solution} for $u_1, u_1'$.

By (3) of Lemma~\ref{lemma:positivity}, there is a solution $u_2$ so that $u_2(t)>0$ and $u_2'(t)<0$ for all $t$. Set $h(t) = u_2(t) - u'_2(t)$. Then $h>0$ satisfies
\[
	h' = (1-P )u_2' - (1+Q) u_2,
\]
and hence $(-1-|Q|-|P|) h \le h' \le  (-1+|Q|+|P|) h$. Just as  computing above, we have $C^{-1} e^{-t} \le u_2(t) -u_2'(t) \le Ce^{-t}$, which gives the upper bound for $u_2, u_2'$ in \eqref{equation:fundamental_solution}. Similarly, by estimating the differential inequality for $u_2+u_2'$, we derive the desired lower bound.

Lastly, we note that the two solutions $u_1, u_2$ are linearly independent because their Wronskian is not zero and furthermore, by  \eqref{equation:fundamental_solution}, 
\begin{align}\label{equation:Wronskian}
	\mathrm{det} \begin{bmatrix} u_1 & u_2 \\ u_1' & u_2' \end{bmatrix} = u_1 u_2' - u_2 u_1' \le -2C^{-2} \qquad \mbox{ for all }t.
\end{align}
\end{proof}

\begin{lemma}\label{lemma:inhomogeneous}
 Let $P(t), Q(t)\in C^{0,\alpha}([0,\infty))$ and $f(t) \in C^0([0,\infty))$. Suppose $1+Q>0$ and that there are constants $d, C_0>0$ such that $|P(t)|, |Q(t)|, |f(t)| \le C_0 e^{-dt}$. Let $u$ solve
\begin{align} \label{equation:inhomogeneous}
	u'' =P u' + (1+Q)u +f.
\end{align}
Then there are constants $C>0$ and $c_1, c_2$ such that, for all $t\ge a$, 
\begin{align} \label{equation:particular}
	|u(t)- \big(c_1 u_1(t) + c_2 u_2(t)\big) |\le \left\{\begin{array}{ll}  Ce^{-dt} & \mbox{ for } d\neq 1\\ Cte^{-t}  & \mbox{ for } d=1\end{array}\right..
\end{align}

\end{lemma}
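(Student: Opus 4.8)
\section*{Proof proposal}

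The plan is to solve \eqref{equation:inhomogeneous} by variation of parameters using the fundamental pair $u_1, u_2$ produced in Lemma~\ref{lemma:ODE-estimate}, and then to read off the decay of a particular solution directly from the two-sided exponential bounds \eqref{equation:fundamental_solution}. A preliminary point is that the Wronskian $W = u_1 u_2' - u_2 u_1'$ is bounded away from zero: from \eqref{equation:Wronskian} we have $|W| \ge 2C^{-2}$, and in fact $W$ solves $W' = PW$, so $|W|$ is pinched between two positive constants since $P \in L^1([0,\infty))$. Because $u_1, u_2$ span the homogeneous solution space, any solution $u$ of \eqref{equation:inhomogeneous} differs from a particular solution $u_p$ by a homogeneous solution $c_1 u_1 + c_2 u_2$; hence it suffices to construct one particular solution $u_p$ and bound $|u_p|$ by the right-hand side of \eqref{equation:particular}, after which the estimate follows for $u - (c_1 u_1 + c_2 u_2) = u_p$.

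Next I would record the pointwise size of the two variation-of-parameters integrands. Combining \eqref{equation:fundamental_solution}, the hypothesis $|f| \le C_0 e^{-dt}$, and $|W| \ge 2C^{-2}$ gives
\[
	\left| \frac{u_2(s) f(s)}{W(s)} \right| \le C e^{-(1+d)s}, \qquad \left| \frac{u_1(s) f(s)}{W(s)} \right| \le C e^{(1-d)s}.
\]
The first integrand is integrable on every $[t,\infty)$, so I attach the growing mode $u_1$ to a tail integral based at infinity: multiplying $\int_t^\infty \frac{u_2 f}{W}\, ds = O(e^{-(1+d)t})$ by $u_1(t) = O(e^t)$ yields a contribution of size $O(e^{-dt})$, uniformly in $d>0$. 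The delicate factor is the decaying-mode coefficient $\int \frac{u_1 f}{W}\, ds$, whose integrand behaves like $e^{(1-d)s}$ and whose correct base point depends on the position of $d$ relative to $1$.

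I would therefore split into cases, with the signs and base points dictated by the standard formula. For $d<1$ the integrand $e^{(1-d)s}$ grows, so with a fixed base point $a$ one has $\int_a^t \frac{u_1 f}{W}\, ds = O(e^{(1-d)t})$, and multiplying by $u_2(t)=O(e^{-t})$ gives $O(e^{-dt})$; thus $u_p = u_1 \int_t^\infty \frac{u_2 f}{W}\, ds + u_2 \int_a^t \frac{u_1 f}{W}\, ds$ obeys the first line of \eqref{equation:particular}. For $d=1$ the same integral is $O(t)$, producing the resonant contribution $O(te^{-t})$, which is exactly the second line of \eqref{equation:particular}. The case I expect to be the main obstacle is $d>1$: here $e^{(1-d)s}$ is itself integrable at infinity, and keeping the base point $a$ would only give the weaker bound $O(e^{-t})$ instead of $O(e^{-dt})$. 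The remedy is to move the base point to infinity, setting $u_p = u_1 \int_t^\infty \frac{u_2 f}{W}\, ds - u_2 \int_t^\infty \frac{u_1 f}{W}\, ds$; since $\int_t^\infty e^{(1-d)s}\, ds = O(e^{(1-d)t})$ for $d>1$, the second term is $O(e^{-t}\cdot e^{(1-d)t}) = O(e^{-dt})$, as required. Equivalently, one may retain the base point $a$ and absorb the now-convergent constant $\int_a^\infty \frac{u_1 f}{W}\, ds$ into the coefficient $c_2$ of $u_2$, which is precisely the freedom afforded by the homogeneous part. In every case $u_p$ solves \eqref{equation:inhomogeneous}, and since changing the base point alters $u_p$ only by a multiple of $u_1$ or $u_2$, the bound \eqref{equation:particular} holds with $a$ any fixed base point.
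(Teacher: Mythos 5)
Your proposal is correct and follows essentially the same route as the paper: variation of parameters with the fundamental pair $u_1, u_2$ from Lemma~\ref{lemma:ODE-estimate}, integrand bounds via \eqref{equation:fundamental_solution} and the Wronskian bound \eqref{equation:Wronskian}, and the case split at $d=1$. Your choice of base points at infinity is just a repackaging of the paper's step of subtracting the limits $A_i$ of the coefficient integrals and absorbing them into $c_1 u_1 + c_2 u_2$, as you yourself observe.
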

\begin{proof}
Let $u_p$ be a particular solution to \eqref{equation:inhomogeneous}. Notice that $u-u_p$ satisfies the homogeneous equation, and hence is a linear combination of $u_1$ and $u_2$, where $\{u_1, u_2\}$ is the set of fundamental solutions from Lemma~\ref{lemma:ODE-estimate}. It suffices to show that the estimate \eqref{equation:particular} holds for $u_p$.

By the method of variation of parameters, we can choose $u_p$ to be 
\[
	u_p = \alpha_1 u_1 + \alpha_2 u_2, 
\]
where the functions $\alpha_1, \alpha_2$ are defined by
\begin{align*}
	\alpha_1(t) &= - \int_0^t\frac{u_2 (s) f(s)}{u_1(s) u_2'(s) - u_2 (s) u_1'(s) } \, ds\\
	\alpha_2 (t)&= \int_0^t \frac{u_1 (s) f(s)}{u_1(s) u_2'(s) - u_2 (s) u_1'(s) } \, ds.
\end{align*}

If $d> 1$, using  \eqref{equation:fundamental_solution}, \eqref{equation:Wronskian}, and the assumption on $f$, we see that both integrals converge as $t\to \infty$. Let $A_i = \lim_{t\to \infty} \alpha_i(t)$ for $i=1, 2$. There is a constant $C>0$ so that
\begin{align*}
	|\alpha_1 (t) - A_1 | &\le \int_t^\infty \left| \frac{u_2 (s) f(s)}{u_1(s) u_2'(s) - u_2 (s) u_1'(s) } \right|\, ds\le C\int_t^\infty e^{-s} |f(s)|\, ds \le Ce^{-(d+1)t}\\
	|\alpha_2(t) - A_2| & \le \int_t^\infty \left| \frac{u_1 (s) f(s)}{u_1(s) u_2'(s) - u_2 (s) u_1'(s) } \right|\, ds\le C\int_t^\infty e^{s} |f(s)|\, ds \le Ce^{-(d-1)t}.
\end{align*} 
It implies that 
\[
	|u_p - A_1 u_1 - A_2 u_2| \le |\alpha_1 - A_1| u_1 + |\alpha_2 - A_2 | u_2 \le Ce^{-dt}.
\]

If $0<d\le 1$, then $\lim_{t\to \infty} \alpha_2(t)$ may not converge. Nevertheless, there is a constant $C>0$ such that $|\alpha_2| \le Ce^{(1-d)t}$ if $d\neq 1$ and $|\alpha_2|\le Ct$ if $d=1$. Together with the above estimate for $\alpha_1$, we obtain
\[
	|u_p - A_1 u_1| \le |\alpha_1 - A_1| u_1 + |\alpha_2  | u_2 \le \left\{\begin{array}{ll}  Ce^{-dt} & \mbox{ for } d\neq 1\\ Cte^{-t}  & \mbox{ for } d=1\end{array}\right..
\]
\end{proof}

We proceed to discuss the asymptotics of a function that solves the static equation up to an error term. We define a \emph{cone} $U$  as an unbounded open subset  in $M\setminus K$ that consists of points in spherical coordinates such that, for some $r_0>0$ and  a non-empty open subset $\Theta$ in the domain of the angular coordinates on $S^{n-1}$:
\[
	U = \{ (r, \theta_1, \cdots, \theta_{n-1})\in M\setminus K: r> r_0\mbox{ and }(\theta_1, \dots, \theta_{n-1}) \in \Theta\}.
\]
\begin{theorem}\label{theorem:expansion}
Let $(M, g)$ be an asymptotically hyperbolic manifold, and $V\in C^2_{\mathrm{loc}}(M\setminus K)$ satisfy
\begin{align} \label{equation:static-inhomogeneous}
	L_g^* V = \tau
\end{align}
where  $\tau\in C^0_{1-q}(M\setminus K)$ is a symmetric $(0,2)$-tensor. Then $V$ satisfies precisely one of the following:
\begin{enumerate}
\item There is a cone $U\subset M\setminus K$ and a constant $C>0$ such that
\[
	C^{-1} |x| \le |V(x)| \le  C|x| \qquad \mbox{ for all } x\in U.
\]
\item There are constants $C>0$ and $0<d\le 1$  such that 
\[
	|V(x)|\le C|x|^{-d} \qquad \mbox{ for all } x\in M\setminus K.
\]
\end{enumerate}
\end{theorem}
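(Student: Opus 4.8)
The plan is to reduce the tensor equation \eqref{equation:static-inhomogeneous} to a scalar inhomogeneous ODE along geodesic rays and then invoke Lemma~\ref{lemma:inhomogeneous}. First I would convert $L^*_g V = \tau$ into an equation for $\nabla^2 V$. Taking the $g$-trace of \eqref{equation:adjoint} gives $-(n-1)\Delta V - R_g V = \mathrm{tr}_g \tau$, and reinserting $\Delta V = \tfrac{-R_g V - \mathrm{tr}_g \tau}{n-1}$ into \eqref{equation:adjoint} yields
\[
	\nabla^2 V = \frac{-R_g V - \mathrm{tr}_g \tau}{n-1}\, g + V\,\mathrm{Ric}_g + \tau.
\]
With the asymptotically hyperbolic normalizations $R_g = -n(n-1) + O^{0,\alpha}(r^{-n-\epsilon})$ and $\mathrm{Ric}_g = -(n-1)g + O^{0,\alpha}(r^{-q})$, and with $\tau \in C^0_{1-q}$, the two leading terms combine to $nVg - (n-1)Vg = Vg$, so
\[
	\nabla^2 V = V g + O(r^{-q}) V + O(r^{1-q}).
\]

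Next I would restrict to a unit-speed $g$-geodesic ray $\gamma$ and set $u(t) = V(\gamma(t))$. Because $\nabla_{\gamma'}\gamma' = 0$, contracting the Hessian gives $u'' = (\nabla^2 V)(\gamma',\gamma')$ with no transverse derivatives entering, and the previous display becomes
\[
	u'' = \big(1 + Q(t)\big) u + f(t),
\]
where $Q$ and $f$ are the curvature and $\tau$ corrections evaluated along $\gamma$, of sizes $O(r^{-q})$ and $O(r^{1-q})$. Since $(M,g)$ is asymptotically hyperbolic, an outgoing $g$-geodesic satisfies $r \asymp e^{t}$ (as the radial geodesics $r = \sinh t$ of the exact hyperboloid do), so $|Q(t)| \le C_0 e^{-qt}$ and $|f(t)| \le C_0 e^{-(q-1)t}$; because $q > \tfrac{n}{2} > 1$ both decay, and by compactness of the sphere of directions the constants are uniform over rays. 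Thus Lemma~\ref{lemma:inhomogeneous} applies (with $P \equiv 0$ and $d = q-1$), producing fundamental solutions $u_1 \asymp e^{t}$ and $u_2 \asymp e^{-t}$ from Lemma~\ref{lemma:ODE-estimate} and constants $c_1, c_2$ with $|u - (c_1 u_1 + c_2 u_2)| \le C e^{-dt}$ (or $Cte^{-t}$ when $d = 1$).

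The dichotomy is then governed by the coefficient $c_1$ of the growing mode, viewed as a function $c_1(\theta)$ of the ray. If $c_1(\theta_0) \ne 0$ for some direction $\theta_0$, then---since $c_1$ depends continuously on $\theta$ through continuous dependence of the ODE data and fundamental solutions, with uniformly small error---$c_1$ is bounded away from $0$ on a neighborhood $\Theta$; on the cone $U$ over $\Theta$ the term $c_1 u_1 \asymp e^t \asymp |x|$ dominates, giving $C^{-1}|x| \le |V| \le C|x|$, which is alternative (1). If instead $c_1(\theta) \equiv 0$, then $u = c_2 u_2 + O(e^{-dt})$ along every ray, and uniformity over the compact sphere of directions yields $|V(x)| \le C|x|^{-d'}$ on all of $M\setminus K$ for some $0 < d' \le 1$, which is alternative (2). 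The two are mutually exclusive, since the lower bound in (1) holds on an unbounded set whereas (2) forces $V \to 0$; hence exactly one occurs.

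The main obstacle I expect is the uniform bookkeeping that turns this outline into a proof: verifying $r \asymp e^{t}$ for genuine $g$-geodesics (not just for the radial lines of $b$) and confirming that every error term decays at a rate admissible for Lemma~\ref{lemma:inhomogeneous}; and then upgrading the per-ray conclusions to the geometric statements, which requires the growing coefficient $c_1(\theta)$ to vary continuously in the direction, all constants to be uniform over the compact set of directions, and the resulting open set of rays to be translated into a cone described by angular coordinates. The ODE analysis itself is already packaged in the preceding lemmas, so it is this continuity-and-uniformity step, together with the error accounting, that demands the real care.
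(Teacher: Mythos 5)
Your proposal is correct and follows essentially the same route as the paper's proof: the same reduction of $L_g^*V=\tau$ to $\nabla^2 V = \big(g+O(r^{-q})\big)V + O(r^{1-q})$, the same restriction to unit-speed $g$-geodesic rays to obtain $u''=(1+Q)u+f$ and invoke Lemma~\ref{lemma:inhomogeneous}, and the same dichotomy (a growing mode along some ray propagates to a cone by continuous dependence on the ray; otherwise uniform decay follows from compactness of the set of rays). The one step you flag as the main obstacle, namely $r\asymp e^{t}$ along genuine $g$-geodesics, is exactly what the paper checks first, by a short argument: $d_b(o,\gamma(t))=\sinh^{-1}(|\gamma(t)|)$, the triangle inequality, and the additive comparability of $d_g$ and $d_b$ give $|t-\sinh^{-1}(|\gamma(t)|)|\le C$ for all $t$.
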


\begin{proof}
Let $B_r$ be a large coordinate ball in $M$ that contains $K$.  It suffices to prove the theorem on $M\setminus B_r$. Note that any point $x\in M\setminus B_r$ could be reached by a geodesic emanating from $\partial B_r$ with the initial velocity $\partial_r$.  Let $\gamma(t)$, $0\le t < \infty$, be the geodesic emanating from a point $p \in \partial B_r$ with $\gamma'(0) = \partial_r$, parametrized by the arc length parameter $t$, i.e.
\[
	t = d_{g} (p, \gamma(t)).
\]
With respect to the hyperbolic metric $b$ on $M\setminus B_r$ (pull back by the diffeomorphism that gives the chart at infinity) and letting $o$ be the origin of $\mathbb{H}^n$, we have $d_b(o, \gamma(t)) = \sinh^{-1}(|\gamma(t)|)$ and hence $|d_b(p, \gamma(t)) - \sinh^{-1}(|\gamma(t)|) |\le d_b(o, p)$ by the triangle inequality, where $|\gamma(t)|$ denotes the radial coordinate of the point $\gamma(t)$. Since the distance in $g$ is comparable to the distance in $b$ by the asymptotically hyperbolic assumption, there is a  constant $C>0$ such that $| t-\sinh^{-1} (|\gamma(t)|)|\le C$ for all $t$. Thus, there is a constant $C>0$ such that  
\begin{align} \label{equation:hyperbolic-distance}
	C^{-1} e^{t} \le |\gamma(t)| \le Ce^{t}.
\end{align}

By~\eqref{equation:static-inhomogeneous} and the assumption on $\tau$, we have
\begin{align}\label{equation:hessian}
\begin{split}
	\nabla^2 V &=\left(\mathrm{Ric}_g-\tfrac{1}{n-1} R_g\, g\right) V +\tau- (\tfrac{1}{n-1} \mathrm{tr}\, \tau)\,g\\
	&= \left( g+ O^{0,\alpha}(r^{-q}) \right)V + O(r^{1-q}).
\end{split}
\end{align}
Let $u(t) = V\circ \gamma(t)$. The equation \eqref{equation:hessian} implies that $u$ satisfies the following ODE:
\begin{align*}
	u'' &= \nabla^2 V(\gamma'(t), \gamma'(t)) + \nabla V(\nabla_{\gamma'(t)} \gamma'(t))\\
	&= \nabla^2 V(\gamma'(t), \gamma'(t))\\
	&= (1+Q(t)) u + f,
\end{align*}
where $|Q(t)|\le Ce^{-qt}$ and $|f(t)|\le Ce^{(1-q)t}$ by \eqref{equation:hessian} and \eqref{equation:hyperbolic-distance}. By Lemma~\ref{lemma:inhomogeneous}, there is a constant $C>0$ and $d\in (0,1]$ such that $V$ satisfies 
\begin{enumerate}
\item either $C^{-1}e^{t}\le |V(\gamma(t))|\le Ce^t$  for all $t$
\item or  $|V(\gamma(t))|\le Ce^{-dt}$  for all $t$.  
\end{enumerate}
If (1) holds for some geodesic $\gamma$, by continuous dependence of ODE solutions on the initial conditions, the estimate $C^{-1}|x| \le |V(x)| \le C|x|$  holds in a cone, where we use \eqref{equation:hyperbolic-distance} to replace $e^t$ with $|\gamma(t)|$ and enlarge the constant $C$ if needed.  If (1) does not hold for any geodesic $\gamma$, then (2) holds for all $\gamma(t)$, with a uniform constant $C$ by compactness of $\partial B_r$.
Using \eqref{equation:hyperbolic-distance} and enlarging $C$ if necessary, we have $|V(x)|\le C|x|^{-d}$ for all $x\in M\setminus K$.
\end{proof}

\begin{corollary}\label{corollary:asymptotics}
Let $(M, g)$ be an asymptotically hyperbolic manifold, and $V\in C^2_{\mathrm{loc}}$ solve $L_g^* V=0$ in $M$. If $V$ is not identically zero, then there is a cone $U\subset M\setminus K$ and a constant $C>0$ such that $V$ satisfies
\[
	C^{-1} |x| \le |V(x)| \le  C|x| \qquad \mbox{ for all } x\in U.
\]
\end{corollary}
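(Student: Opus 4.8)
The plan is to read the corollary off Theorem~\ref{theorem:expansion} with $\tau=0$ and then eliminate the decaying alternative. Applying Theorem~\ref{theorem:expansion} to $V$ with $\tau=0$, we know $V$ obeys exactly one of two alternatives: either the two-sided linear bound $C^{-1}|x|\le |V(x)|\le C|x|$ on a cone $U$, which is precisely the assertion of the corollary, or the global decay bound $|V(x)|\le C|x|^{-d}$ on all of $M\setminus K$ for some $0<d\le 1$. It therefore suffices to prove that the second alternative can hold only when $V\equiv 0$.

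To rule out decay, I would first exploit that $V\not\equiv 0$ solving \eqref{equation:static} makes $(M,g)$ static. By the Fischer--Marsden constant-scalar-curvature property \cite{Fischer-Marsden:1975} together with the asymptotically hyperbolic normalization $R_g\to -n(n-1)$ and the connectedness of $M$, the scalar curvature is identically $-n(n-1)$; hence \eqref{equation:static-2} holds and in particular $\Delta V=nV$, i.e.\ $T_0V=0$ with $T_0u=\Delta u-nu$ as in Lemma~\ref{lemma:isomorphism}. Assuming now the decay alternative $|V|\le C|x|^{-d}$ with $0<d\le 1$, I would promote this pointwise bound to the weighted membership $V\in C^{2,\alpha}_{-d}(M)$: since $g\in C^\infty_{\mathrm{loc}}$ the function $V$ is smooth, so on each compact chart the ordinary $C^{2,\alpha}$ norm is finite, while at infinity the $b$-unit balls $B_1(x)$ have uniformly bounded geometry with $|y|$ comparable to $|x|$ throughout $B_1(x)$, so interior Schauder estimates for $\Delta V=nV$ on each $B_1(x)$ bound the frame derivatives and their H\"older seminorm at the same rate $|x|^{-d}$. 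Because $d\in(0,1]\subset(-1,n)$ (using $n\ge 3$), Lemma~\ref{lemma:isomorphism} gives that $T_0\colon C^{2,\alpha}_{-d}\to C^{0,\alpha}_{-d}$ is an isomorphism, hence injective; combined with $T_0V=0$ and $V\in C^{2,\alpha}_{-d}$ this forces $V\equiv 0$, contradicting the hypothesis. Thus the decay alternative is excluded for $V\not\equiv 0$, and the cone estimate of Theorem~\ref{theorem:expansion}(1) holds, which is the claim.

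The main obstacle is the regularity upgrade in the second paragraph: one must convert the purely pointwise decay produced by the ODE analysis along geodesics into control in the weighted H\"older norm so that Lemma~\ref{lemma:isomorphism} applies. This is routine precisely because $\Delta V=nV$ is uniformly elliptic in the bounded geometry of $(M,b)$, so a rescaled interior Schauder estimate transfers the decay of $V$ to all derivatives with no loss of rate. I emphasize that having the \emph{exact} equation $\Delta V=nV$ (equivalently, constant scalar curvature), rather than merely $\Delta V-nV=O(r^{-n-\epsilon})V$, is what keeps the weight $d$ strictly below the indicial threshold $s=n$ at which $T_0$ ceases to be invertible; this is why the passage through staticity, and not just the Fredholm property of Proposition~\ref{proposition:Fredholm}, is needed to conclude injectivity.
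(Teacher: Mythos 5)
Your proposal is correct, and its skeleton necessarily matches the paper's: apply Theorem~\ref{theorem:expansion} with $\tau=0$, then eliminate the decay alternative. The difference lies entirely in how the decay case is ruled out, and there the two arguments genuinely diverge. The paper does it in one line: staticity forces $R_g\equiv-n(n-1)$, hence $\Delta V=nV$ as in \eqref{equation:static-2}, and a solution of $\Delta V=nV$ tending to zero at infinity can have neither a positive interior maximum nor a negative interior minimum, so $V\equiv 0$ by the maximum principle. You instead argue functional-analytically: promote the pointwise bound $|V|\le C|x|^{-d}$ to weighted H\"older control by uniform interior Schauder estimates in the bounded geometry of $(M,b)$, and then invoke injectivity of $T_0=\Delta-n$ from Lemma~\ref{lemma:isomorphism}. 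This is valid, and your closing observation is the right one: it is the \emph{exact} eigenvalue equation supplied by staticity that lets you quote the isomorphism Lemma~\ref{lemma:isomorphism}, whereas Proposition~\ref{proposition:Fredholm} alone would only give a finite-dimensional kernel. What the paper's route buys is economy --- the maximum principle consumes the pointwise decay directly, with no regularity upgrade --- while your route is the one that generalizes when no sign structure is available for a pointwise argument. One technical wrinkle in your version: the paper defines $C^{k,\alpha}_{-q}(M)$ as the \emph{completion} of compactly supported functions in the weighted norm, and finiteness of the $C^{2,\alpha}_{-d}$ norm does not place $V$ in that completion (the function $|x|^{-d}$ itself has finite norm at weight $-d$ but cannot be approximated by compactly supported functions at that same weight). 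The fix is trivial: your Schauder bounds give $V\in C^{2,\alpha}_{-d'}(M)$ in the completion sense for every $0<d'<d$, and since $d'\in(-1,n)$ the isomorphism applies at weight $-d'$, forcing $V\equiv 0$ all the same. With that one adjustment your proof is complete.
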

\begin{proof}
Recall $\Delta V = nV$ in \eqref{equation:static-2}. By letting $\tau=0$ in Theorem~\ref{theorem:expansion}, we have that either the desired estimate holds, or there are constants $d, C>0$ such that  $|V(x)|\le C|x|^{-d}$ for all $x\in M\setminus K$. However, the latter case implies that $V$ is identically zero by  maximum principle.
\end{proof}

We now prove the main result in this section. 

\begin{proof}[Proof of Theorem~\ref{theorem:surjectivity}]
It suffices to show that the linearized scalar curvature map is surjective.  Local surjectivity of the scalar curvature map follows from standard functional analysis. 

We first show that the range of $L_g$ is closed. Define the operator $T(u) := L_g (ug)$ for functions $u\in C^{k,\alpha}_{-s}(M)$. Then $\frac{1}{1-n} T(u)=\Delta u +\tfrac{1}{n-1}R_gu$ is asymptotic to $\Delta - n$ and hence Fredholm by Proposition~\ref{proposition:Fredholm}. In particular, the range of $T$ has finite codimension, and so does the range of $L_g$. It implies that the range of $L_g $ is closed.

To see surjectivity of $L_g$, we show that the adjoint operator $L_g^*: (C^{k-2, \alpha}_{-s})^* \to (C^{k,\alpha}_{-s})^*$ has a trivial kernel. Let $u\in  (C^{k-2, \alpha}_{-s})^*$ weakly solve $L_g^*u=0$. Note that since $C_c^\infty$ is dense in $C^{k-2,\alpha}_{-s}$, $u$ is, in  particular, a distribution. Taking the trace of $L_g^* u=0$ implies that $u$ weakly solves an elliptic PDE, whose coefficients are locally smooth by the hypothesis $g\in C^\infty_\mathrm{loc}$. Applying elliptic regularity for distribution solutions (see, e.g. \cite[Theorem 6.33]{Folland:1995}), we have $u\in C^{k,\alpha}_{\mathrm{loc}}$ with the duality given by 
\begin{align} \label{equation:pairing}
	 u (\phi) = \int_M u \phi \, d\mu_g,\quad \mbox{ for all } \phi \in C^\infty_c.
\end{align}

Suppose, to give a contradiction, that $u$ is not identically zero. We shall show that the above pairing is not bounded for some $\phi \in C^{k-2,\alpha}_{-s}$.   By Corollary~\ref{corollary:asymptotics}, there is a constant $C>0$ such that $|u(x)|\ge C|x|$ in a nonempty cone $U\subset M\setminus K$. We may without loss of generality assume $u>0$ and hence $u(x) \ge C|x|$ on $U$.  Let $\phi(x)$ be a non-negative function in  $C^{k-2,\alpha}_{-s}$ so that $\phi(x) = |x|^{-s}$ in  a smaller cone $U'\subset U \subset M\setminus K$ and $\phi\equiv 0$ outside $U$.  Let $\phi_i \in C_c^\infty(U)$ be a monotone sequence of non-negative functions that converge to $\phi$ in $C^{k-2,\alpha}_{-s}$ (for example, let $\phi_i = \chi_i \phi$ where $\chi_i$ is a monotone sequence of bump functions  uniformly bounded in $C^\infty$). Then 
\begin{align*}
	u( \phi ) &= \lim_{i\to \infty} u (\phi_i)=\lim_{i\to \infty}\int_M u \phi_i \, d\mu_g = \int_M u \phi \, d\mu_g, 
\end{align*}
where the first equality is from continuity of $u$ as a functional, the second equality is by~\eqref{equation:pairing}, and the last equality is by monotone convergence theorem. However, since $s\le  n$ and $d\mu_g =\Big( \frac{r^{n-1}}{\sqrt{1+r^2}} + O(r^{n-2-q})\Big) \, dr d\omega $, the last integral diverges to infinity:
\[
	\int_M \phi u\, d\mu_g \ge C\int_{U'} r^{1-s}\, d\mu_g = \infty.
\]

\end{proof}

\section{Mass minimizer and static uniqueness} \label{section:main-theorem}

Let $(M,g)$ be an $n$-dimensional asymptotically hyperbolic manifold. Consider the following Banach (affine) space of symmetric $(0,2)$-tensors: 
\begin{align} \label{equation:Banach}
\begin{split}
\mathcal{B} &= \{ g+ h : h\in C^{2,\alpha}_{-q}(M) \mbox{ is a symmetric $(0,2)$-tensor}\}\\
\mathcal{M}\subset \mathcal{B} &\mbox{ is an open neighborhood of $g$ containing positive definite tensors}.
\end{split}
\end{align}

Suppose $f\in C^{2,\alpha}_{\mathrm{loc}}(M)$ satisfies the following asymptotics, for some $a_0, a_1, \dots, a_n\in \mathbb{R}$, 
\begin{align} \label{equation:static-asymptotics}
	f (x)= a_{0}\sqrt{1+r^2} -\left( a_{1} x_1 + \cdots + a_{n} x_n \right) + O^{2,\alpha}(|x|^{1-q}).
\end{align}
 By direct computation,
\begin{align} \label{equation:hessian-static}
\begin{split}
	\nabla^2 \sqrt{1+r^2} &=  \sqrt{1+r^2}\, g+ O^{0,\alpha}(r^{1-q})\\
	\nabla^2 x_i &= x_i \, g + O^{0,\alpha}(r^{1-q})\quad \mbox{ for } i =1,\dots, n.
\end{split}
\end{align}
Therefore, we have
\begin{align} \label{equation:almost-static}
	L_g^* f =  -( \Delta f) g +\nabla^2 f - f\mathrm{Ric}_g = O^{0,\alpha}(r^{1-q}).
\end{align}
We define  the corresponding functional $\mathcal{F}$ on $\mathcal{M}$ by
\begin{align}\label{equation:functional}
	\mathcal{F}(\gamma) &=a_0 p_0(\gamma)-\left( a_1p_1(\gamma)+\cdots +a_n p_n(\gamma)\right)  - \int_M \left(R(\gamma) + n(n-1)\right) f\, d\mu_g
\end{align}
where $R: \mathcal{M}\to C^{0,\alpha}_{-q}$ is the scalar curvature map and recall $\left(p_0(\gamma), \dots, p_n(\gamma) \right)$ denotes the mass of~$\gamma$.

It may not be immediately obvious that $\mathcal{F}(\gamma)$ is finite for $\gamma\in \mathcal{M}$. Since $\gamma$ is not assumed to satisfy the scalar curvature assumption \eqref{item:scalar} of Definition~\ref{definition:AH}, either term  in the definition of $\mathcal{F}$ may diverge. In the next lemma, we give an alternative expression for  $\mathcal{F}$ and show that  $\mathcal{F}$  is well-defined. We also compute its first variation. 
\begin{lemma}\label{lemma:functional}
Let  $f\in C^{2,\alpha}_{\mathrm{loc}}(M)$ satisfy the asymptotics
\[
	f (x)= a_{0}\sqrt{1+r^2} -\left( a_{1} x_1 + \cdots + a_{n} x_n \right) + O^{2,\alpha}(|x|^{1-q}).
\]
Then the corresponding functional $\mathcal{F}: \mathcal{M}\to \mathbb{R}$ can be expressed as 
\begin{align} \label{equation:functional2}
\begin{split}
\mathcal{F}(\gamma)&=\int_M \left(\left[ L_g (\gamma-\mathfrak{b}) - (R(\gamma) + n(n-1) )\right]f-(\gamma - \mathfrak{b})\cdot L_g^*f\right)  \, d\mu_g,
\end{split}
 \end{align}
where $\mathfrak{b}$ is any fixed smooth symmetric $(0,2)$-tensor in $M$ that coincides with the hyperbolic metric~$b$ in the chart at infinity.

 As a consequence, the linearization $D\mathcal{F}|_g : C^{2,\alpha}_{-q} \to \mathbb{R}$  at $g$  is given by 
\[
	D\mathcal{F}|_g(h) = -\int_M h \cdot L_g^*f \, d\mu_g.
\]
\end{lemma}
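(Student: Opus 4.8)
The plan is to derive the alternative expression \eqref{equation:functional2} from the Green-type divergence identity for the pair $(L_g, L_g^*)$, and then read off the first variation by differentiating the resulting absolutely convergent integral. First I would record the pointwise identity
\[
	V\,L_g h - h\cdot L_g^* V = \mathrm{div}_g\,\mathbb{U}_g(V,h),
\]
valid for any function $V$ and symmetric $(0,2)$-tensor $h$, where $\mathbb{U}_g(V,h)$ is the vector field $g$-dual to $V(\mathrm{div}_g h - d\,\mathrm{tr}_g h) + (\mathrm{tr}_g h)\,dV - h(\nabla V,\cdot)$, all taken with respect to $g$. This is the standard identity underlying the definition of the mass: its flux through $S_r$ is exactly the integrand of \eqref{equation:mass} with $b$ replaced by $g$. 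Applying it with $V=f$ and $h=\gamma-\mathfrak b$, integrating over a large coordinate ball $B_r\supset K$, and using the divergence theorem, I would pass to the limit. Since $f = a_0 V_0 - \sum_i a_i V_i + O^{2,\alpha}(r^{1-q})$ by \eqref{equation:static-asymptotics} and $\gamma-\mathfrak b$ agrees with $\gamma-b$ at infinity, Remark~\ref{remark:mass} identifies the limit of the flux with $H_\gamma(f) = a_0 p_0(\gamma) - \sum_i a_i p_i(\gamma)$, so that $\lim_{r\to\infty}\int_{B_r}\big(f\,L_g(\gamma-\mathfrak b) - (\gamma-\mathfrak b)\cdot L_g^* f\big)\,d\mu_g$ equals the mass terms of \eqref{equation:functional}.

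The crux is an integrability estimate showing that, although neither $\int_M f\,L_g(\gamma-\mathfrak b)$ nor $\int_M (R(\gamma)+n(n-1))f$ need converge for $q\in(\tfrac n2,n)$, their difference does. Near infinity $\mathfrak b=b$ and $R(b)=-n(n-1)$, so expanding the scalar curvature map at $b$ via $Q_b(\gamma-b):=R(\gamma)-R(b)-L_b(\gamma-b)$ and writing $L_g(\gamma-b)=L_b(\gamma-b)+(L_g-L_b)(\gamma-b)$, the leading terms $L_b(\gamma-b)$ cancel, leaving
\[
	L_g(\gamma-\mathfrak b) - (R(\gamma)+n(n-1)) = (L_g-L_b)(\gamma-b) - Q_b(\gamma-b) = O^{0,\alpha}(r^{-2q}).
\]
Here $(L_g-L_b)(\gamma-b)=O(r^{-2q})$ because the coefficients of $L_g-L_b$ (differences of Christoffel symbols and of $\mathrm{Ric}$) are $O(r^{-q})$, while $Q_b(\gamma-b)=O(r^{-2q})$ is the quadratic Taylor remainder, both by Lemma~\ref{lemma:basic}. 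Multiplying by $f=O(r)$ and recalling $d\mu_g\sim r^{n-2}\,dr\,d\omega$, the bracket contributes $O(r^{n-1-2q})$, which is integrable precisely because $q>\tfrac n2$; likewise $(\gamma-\mathfrak b)\cdot L_g^* f = O(r^{-q})\cdot O(r^{1-q})$ is integrable by \eqref{equation:almost-static}. Thus the right-hand integrand of \eqref{equation:functional2} lies in $L^1(M)$.

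To finish the first claim, I would split $f\,L_g(\gamma-\mathfrak b) = \big[L_g(\gamma-\mathfrak b)-(R(\gamma)+n(n-1))\big]f + (R(\gamma)+n(n-1))f$ inside the truncated integral. Since the first summand together with $-(\gamma-\mathfrak b)\cdot L_g^* f$ converges absolutely as $r\to\infty$, comparison with the flux limit from the first paragraph forces $\int_{B_r}(R(\gamma)+n(n-1))f\,d\mu_g$ to converge; in particular $\mathcal F(\gamma)$ is finite. Subtracting, $\int_M(R(\gamma)+n(n-1))f\,d\mu_g = H_\gamma(f) - \int_M\big(\text{right-hand integrand}\big)\,d\mu_g$, and substituting into \eqref{equation:functional} yields \eqref{equation:functional2}.

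For the linearization I would differentiate \eqref{equation:functional2} along $\gamma_t=g+th$. Because $L_g$ is linear and $DR|_g=L_g$ by \eqref{equation:linearized}, the $t$-derivative of the bracket $L_g(\gamma_t-\mathfrak b)-(R(\gamma_t)+n(n-1))$ is $L_g h - L_g h = 0$; as $f$ is fixed, the first term contributes nothing, leaving
\[
	D\mathcal F|_g(h) = -\int_M h\cdot L_g^* f\,d\mu_g,
\]
which converges since $h\cdot L_g^* f=O(r^{1-2q})$, and differentiation under the integral is justified by the uniform $L^1$ bounds from the second paragraph. The main obstacle is the bookkeeping of conditionally convergent integrals: the individual terms of \eqref{equation:functional} genuinely diverge, so one must work with truncations $B_r$ and isolate the $O(r^{-2q})$ cancellation in the bracket; a secondary technical point is matching the divergence-theorem flux with the mass through Remark~\ref{remark:mass}.
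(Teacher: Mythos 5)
Your proposal is correct and follows essentially the same route as the paper's proof: the Green-type identity $f\,L_g e - e\cdot L_g^* f = \mathrm{div}_g\,\mathbb{U}_g(f,e)$ combined with the divergence theorem, the identification of the flux limit with $a_0p_0(\gamma)-\sum_i a_ip_i(\gamma)$ via Remark~\ref{remark:mass}, the Taylor expansion $R(\gamma)+n(n-1)=L_g(\gamma-\mathfrak{b})+O(r^{-2q})$ to get convergence, and differentiation of \eqref{equation:functional2} for the first variation. The only difference is that you make explicit (via truncations over $B_r$ and the $L_g-L_b$ comparison) the convergence bookkeeping that the paper dismisses as ``routine computations,'' which is a welcome addition rather than a deviation.
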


\begin{proof}
We recall the formulas for $L_g $ and $L_g^*$ in \eqref{equation:linearized}  and \eqref{equation:adjoint} for the following computations.

Let $e = \gamma -\mathfrak{b}$. By  Definition \ref{def:massfunctional} and  Remark~\ref{remark:mass}, we have
\begin{align*}
	\mathcal{F}(\gamma) &= \lim_{r\rightarrow\infty}\int_{S_r} \big(f\left(\mathrm{div} \, e-d (\mathrm{tr}\, e) \right)(\nu) + (\mathrm{tr} \,e) \,df (\nu) - e(\nabla f, \nu)\big) \, d\sigma_g- \int_M \big(R(\gamma) + n(n-1)\big) f\, d\mu_g \notag\\
	&= \int_M \mathrm{div} \left[f\left(\mathrm{div}\, e-d (\mathrm{tr}\, e) \right) + (\mathrm{tr}\, e) \,df - e(\nabla f,\cdot)\right] \, d\mu_g - \int_M \big(R(\gamma) + n(n-1)\big) f\, d\mu_g \notag\\
	&= \int_M \left[ \mathrm{div} \,\mathrm{div}\, e - \Delta  (\mathrm{tr} \, e) - R(\gamma) - n(n-1) \right]f\, d\mu_g -\int_M \big( - (\Delta f) g +\nabla^2  f\big) \cdot e \, d\mu_g\\
	&= \int_M \left[ L_g (e) - (R(\gamma) + n(n-1) )\right]f\, d\mu_g -\int_M \big( - (\Delta f) g +\nabla^2  f- f\mathrm{Ric}_g\big) \cdot e \, d\mu_g.
\end{align*} 
Note  \eqref{equation:almost-static} and  $R(\gamma)+n(n-1) = L_g(e) + O(r^{-2q})$ by Taylor expansion.  Both integrals converge by routine computations.
\end{proof}

So far, we have considered the functional $\mathcal{F}$ defined by an arbitrary function $f$ satisfying the asymptotics \eqref{equation:static-asymptotics}. In what follows, we will choose specifically $f$ which is an eigenfunction  $\Delta f = nf$.

\begin{lemma}[{\cite[Lemma 3.3]{Qing:2003}}]\label{lemma:eigenfunction}
Let $(M, g)$ be an asymptotically hyperbolic manifold.  There are functions $f_0, f_1, \dots, f_n\in C^{2,\alpha}_{\mathrm{loc}} (M)$ satisfying $\Delta f_0 = nf_0$ and $\Delta f_i=nf_i$ for $i=1,\dots, n$ with the asymptotics
\begin{align*} 
	f_0 (x) &= \sqrt{1+r^2} + O^{2,\alpha}(r^{1-q})\\
	f_i (x) &= x_i + O^{2,\alpha}(r^{1-q}).  
\end{align*}
\end{lemma}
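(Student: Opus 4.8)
The plan is to construct each $f_i$ by solving the eigenvalue equation $\Delta f_i = n f_i$ with prescribed leading asymptotics, using the isomorphism from Lemma~\ref{lemma:isomorphism} to correct an approximate solution. First I would write down an explicit function with the desired leading behavior: on the hyperboloid model, the coordinate functions $V_0 = \sqrt{1+r^2}$ and $V_i = x_i$ already satisfy $\mathring{\Delta} V = nV$ with respect to the hyperbolic metric $b$, since they are restrictions of the Minkowski linear functions to the hyperboloid. Extend these smoothly into the compact part of $M$ to obtain functions $\bar f_0, \bar f_1, \dots, \bar f_n \in C^\infty_{\mathrm{loc}}(M)$ that agree with $\sqrt{1+r^2}$ and $x_i$ in the chart at infinity.

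The key step is to estimate the error $\Delta \bar f_i - n\bar f_i$, where now $\Delta$ is the Laplacian of $g$. Since $g - b \in C^{2,\alpha}_{-q}$, the difference between $\Delta$ and $\mathring{\Delta}$ acting on $\bar f_i$ contributes terms controlled by $g-b$ and its first derivatives against the first and second derivatives of $\bar f_i$, which grow at most linearly. A direct computation of the type already carried out in \eqref{equation:hessian-static} gives
\[
	\Delta \bar f_i - n \bar f_i = O^{0,\alpha}(r^{1-q}).
\]
Writing $\psi_i := \Delta \bar f_i - n \bar f_i \in C^{0,\alpha}_{-(q-1)}(M)$, I want to solve away this error. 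By Lemma~\ref{lemma:isomorphism}, the operator $T_0 = \Delta - n : C^{2,\alpha}_{-s} \to C^{0,\alpha}_{-s}$ is an isomorphism for every $s \in (-1, n)$; since $q \in (\tfrac n2, n)$, the weight $s = q-1$ lies in $(-1,n)$, so there is a unique $w_i \in C^{2,\alpha}_{-(q-1)}(M)$ with $(\Delta - n) w_i = -\psi_i$. Setting $f_i := \bar f_i + w_i$ then gives $\Delta f_i = n f_i$ exactly, and $w_i = O^{2,\alpha}(r^{1-q})$ provides precisely the claimed remainder in the asymptotic expansion.

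The main obstacle to watch is the regularity bookkeeping: the statement asks for $f_i \in C^{2,\alpha}_{\mathrm{loc}}$ and a $C^{2,\alpha}$ remainder, whereas Definition~\ref{definition:AH} only posits $g - b \in C^{2,\alpha}_{-q}$, so the coefficients of $\Delta$ lie in $C^{1,\alpha}$ and one must check that the error $\psi_i$ genuinely lands in a $C^{0,\alpha}$ weighted space with the correct weight $-(q-1)$ before invoking Lemma~\ref{lemma:isomorphism} at that exact regularity level. Once this is confirmed the argument closes immediately, and since the construction is performed independently for each index it produces all $n+1$ functions at once. (This is exactly the scheme of \cite[Lemma 3.3]{Qing:2003}, which I would cite for the verification that the approximate eigenfunctions may be corrected in the weighted spaces.)
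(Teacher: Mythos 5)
Your proposal is correct and follows essentially the same route as the paper: the paper likewise takes $\sqrt{1+r^2}$ and $x_i$ as approximate eigenfunctions, obtains the error $O^{0,\alpha}(r^{1-q})$ by tracing the Hessian computation \eqref{equation:hessian-static}, and corrects it using the isomorphism $\Delta - n : C^{2,\alpha}_{1-q}\to C^{0,\alpha}_{1-q}$ from Lemma~\ref{lemma:isomorphism} with weight $s=q-1\in(-1,n)$. Your regularity worry is moot here since Definition~\ref{definition:AH} assumes $g\in C^\infty_{\mathrm{loc}}$, so the coefficients are locally smooth and the weighted isomorphism applies exactly as stated.
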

\begin{proof}
Taking the trace of equations in~\eqref{equation:hessian-static} yields 
\begin{align*}
	\Delta \sqrt{1+r^2} &= n \sqrt{1+r^2} + O^{0,\alpha}(r^{1-q}) \\
	\Delta x_i &= n x_i + O^{0,\alpha}(r^{1-q}).
\end{align*}
Note that the operator $\Delta - n :C^{2,\alpha}_{1-q}\to C^{0,\alpha}_{1-q}$ is an isomorphism by Lemma~\ref{lemma:isomorphism}. There is a unique $v\in C^{2,\alpha}_{1-q}$ that solves $\Delta v -nv= -\Delta \sqrt{1+r^2} + n \sqrt{1+r^2}$.  We set $f_0 = \sqrt{1+r^2} + v$. Other eigenfunctions $f_i$ are obtained similarly.
\end{proof}

\begin{theorem}\label{theorem:mass-rigidity}
Let $(M,g)$ be an asymptotically hyperbolic manifold with scalar curvature $R_g \ge -n(n-1)$ and with the equality $p_0 = \sqrt{p_{1}^2 + \cdots + p_{n}^2}$, where $(p_0, p_1, \dots, p_n)$ is the mass of $g$.   Suppose the following holds:
\begin{itemize}  
\item[($\star$)] There is an open neighborhood $\mathcal{M}$ of $g$ in $\mathcal{B}$ such that for any $\gamma\in \mathcal{M}$ with $R(\gamma) = R_g$, the inequality $p_0(\gamma)\ge \sqrt{ (p_1(\gamma))^2 + \dots + (p_n(\gamma))^2}$ holds.   
\end{itemize}
 Then $(M, g)$ is static with a static potential $f>0$ satisfying the asymptotics:
 \begin{align}\label{equation:static-potential}
	f &=\begin{cases} p_0\sqrt{1+r^2}-(p_1 x_1  + \cdots + p_n x_n) + O^{2,\alpha}(r^{1-q}) & \mbox{ if } p_0>0\\  \sqrt{1+r^2}+ O^{2,\alpha}(r^{1-q})  & \mbox{ if } p_0 = 0\end{cases}.
\end{align}
\end{theorem}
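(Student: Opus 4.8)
The plan is to realize $g$ as a constrained minimizer of the functional $\mathcal{F}$ and then read off the static equation from its first variation. First I would choose the constants $(a_0, a_1, \dots, a_n)$ in the definition of $f$ so that the \emph{linear} part of $\mathcal{F}$ is precisely the quantity whose minimum among metrics realizing the mass equality is $0$. Concretely, when $p_0>0$ set $a_0 = p_0$ and $a_i = p_i$, so that the mass inequality $p_0(\gamma)\ge\sqrt{p_1(\gamma)^2+\cdots+p_n(\gamma)^2}$ together with Cauchy--Schwarz gives
\[
	a_0 p_0(\gamma) - \bigl(a_1 p_1(\gamma)+\cdots+a_n p_n(\gamma)\bigr)
	\ge p_0 p_0(\gamma) - \sqrt{p_1^2+\cdots+p_n^2}\,\sqrt{p_1(\gamma)^2+\cdots+p_n(\gamma)^2}\ge 0
\]
for every $\gamma\in\mathcal{M}$ with $R(\gamma)=R_g$, with equality at $\gamma=g$ (using the hypothesis $p_0=\sqrt{p_1^2+\cdots+p_n^2}$). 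When $p_0=0$ I take $a_0=1$, $a_i=0$, so the linear part is just $p_0(\gamma)$, which is $\ge 0$ on the constraint set by $(\star)$ and vanishes at $g$. With $f$ the eigenfunction from Lemma~\ref{lemma:eigenfunction} attached to these constants (so $\Delta f = nf$ and $f$ has the asymptotics~\eqref{equation:static-potential}), the second term $-\int_M(R(\gamma)+n(n-1))f\,d\mu_g$ vanishes identically on the constraint $R(\gamma)=R_g$. Hence $\mathcal{F}(\gamma)\ge 0 = \mathcal{F}(g)$ for all $\gamma\in\mathcal{M}$ with $R(\gamma)=R_g$: the metric $g$ minimizes $\mathcal{F}$ subject to the scalar curvature constraint.

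Next I would convert this constrained minimization into a pointwise equation via Lagrange multipliers. Theorem~\ref{theorem:surjectivity} is exactly what makes this work: the scalar curvature map $R$ is a submersion at $g$ because $L_g=DR|_g$ is surjective onto $C^{k-2,\alpha}_{-s}$ (in particular onto $C^{0,\alpha}_{-q}$), so the constraint set $\{R(\gamma)=R_g\}$ is a Banach submanifold near $g$ whose tangent space is $\ker L_g$. The minimization then forces the derivative $D\mathcal{F}|_g$ to annihilate $\ker L_g$: for every symmetric $(0,2)$-tensor $h\in C^{2,\alpha}_{-q}$ with $L_g h=0$ we have $D\mathcal{F}|_g(h)=0$. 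By Lemma~\ref{lemma:functional}, $D\mathcal{F}|_g(h)=-\int_M h\cdot L_g^* f\,d\mu_g$, so this says $L_g^* f$ pairs to zero with the kernel of $L_g$. Since the range of $L_g$ is closed (established in the proof of Theorem~\ref{theorem:surjectivity}), the annihilator of $\ker L_g$ is the closure of the range of $L_g^*$, and a standard functional-analytic argument identifies $L_g^* f$ as lying in the image of the adjoint constraint, i.e. $L_g^* f$ must equal the $L_g^*$ of the Lagrange multiplier. The cleanest route is: the vanishing of $D\mathcal{F}|_g$ on $\ker L_g$, combined with surjectivity of $L_g$, yields $L_g^* f = 0$ weakly, hence strongly by elliptic regularity (using $g\in C^\infty_{\mathrm{loc}}$ as in Remark~\ref{remark:smooth}). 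This exhibits $f$ as a genuine static potential.

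The remaining points are positivity of $f$ and the precise asymptotics, which are comparatively soft. The asymptotics~\eqref{equation:static-potential} are already built into the construction of $f$ through Lemma~\ref{lemma:eigenfunction}. For positivity, I would argue that $f$ solves $L_g^* f=0$, hence by \eqref{equation:static-2} satisfies $\Delta f = nf$ and $\nabla^2 f=(\mathrm{Ric}_g+ng)f$; its leading asymptotic term $p_0\sqrt{1+r^2}-(p_1 x_1+\cdots+p_n x_n)$ is a positive multiple of a Minkowski time-function on the hyperboloid (a future-pointing causal vector paired against the position vector, positive precisely because $p_0\ge\sqrt{p_1^2+\cdots+p_n^2}$), so $f>0$ near infinity. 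A maximum-principle argument for the operator $\Delta - n$, whose zeroth-order coefficient has the right sign, then propagates positivity inward: $f$ cannot attain a nonpositive interior minimum, so $f>0$ throughout $M$.

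The main obstacle I anticipate is the Lagrange-multiplier step, specifically justifying that the first-order stationarity $D\mathcal{F}|_g(h)=0$ for all $h\in\ker L_g$ upgrades to the honest equation $L_g^* f=0$ rather than merely $L_g^* f\in(\ker L_g)^{\perp}$. This requires knowing that the orthogonal complement of $\ker L_g$ is captured exactly by the range of $L_g^*$, which in turn rests on the closed-range property of $L_g$ proved via the Fredholm argument of Proposition~\ref{proposition:Fredholm}, together with the fact that the Lagrange multiplier produced lives in a space where the duality pairing $\int_M h\cdot L_g^* f\,d\mu_g$ is valid and the multiplier contribution itself has the form $L_g^*(\text{multiplier})$. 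Handling the possible non-reflexivity and the poorly understood dual $(C^{k-2,\alpha}_{-s})^*$ (flagged in the introduction) is the delicate part; I expect the resolution is to phrase minimization directly in terms of the constrained variation—perturbing $g$ by $\gamma=g+th+o(t)$ with $R(\gamma)=R_g$ solvable for $h\in\ker L_g$ by the implicit function theorem applied to the submersion $R$—so that the multiplier never needs to be named explicitly and one obtains $\int_M h\cdot L_g^* f\,d\mu_g=0$ for all $h\in\ker L_g$ directly, then concludes $L_g^* f=0$ by surjectivity of $L_g$ and regularity.
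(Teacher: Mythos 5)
Your setup matches the paper's: same functional $\mathcal{F}$, same choice of coefficients $a_k=p_k$ (resp.\ $a_0=1$, $a_i=0$ when $p_0=0$), same use of Cauchy--Schwarz and ($\star$) to exhibit $g$ as a constrained minimizer, and the same invocation of Theorem~\ref{theorem:surjectivity} to legitimize a Lagrange-multiplier argument. But there is a genuine gap at the decisive step, and it is exactly the one you flag at the end: the first-order condition $D\mathcal{F}|_g(h)=-\int_M h\cdot L_g^*f\,d\mu_g=0$ for all $h\in\ker L_g$ does \emph{not} imply $L_g^*f=0$, weakly or otherwise. ``$L_g^*f=0$ weakly'' would mean $\int_M h\cdot L_g^*f\,d\mu_g=0$ for \emph{all} $h\in C_c^\infty$, whereas stationarity on the constraint set only gives vanishing against $h\in\ker L_g$. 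By the closed range theorem, that condition says precisely that the functional $h\mapsto -\int_M h\cdot L_g^*f\,d\mu_g$ lies in $(\ker L_g)^{\perp}=\mathrm{ran}\,(L_g^*)$ acting on $(C^{0,\alpha}_{-q})^*$, i.e.\ $-L_g^*f=L_g^*\lambda$ for some multiplier $\lambda\in (C^{0,\alpha}_{-q})^*$ --- the finite-dimensional analogue is $D\mathcal{F}=\lambda\cdot DR$, not $D\mathcal{F}=0$. Your proposed fix (parametrizing constrained variations by the implicit function theorem so the multiplier ``never needs to be named'') re-derives the same stationarity condition and then makes the same unjustified leap; renaming the condition does not remove $\lambda$.

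What the paper does, and what is missing from your proposal, is the proof that $\lambda$ vanishes. The paper: (i) obtains $\lambda\in(C^{0,\alpha}_{-q})^*$ from the Lagrange multiplier theorem; (ii) tests against $h\in C_c^\infty$ so that $\lambda$ is a distribution solving $L_g^*\lambda=-L_g^*f$, and upgrades it to a classical $C^{2,\alpha}_{\mathrm{loc}}$ solution by elliptic regularity (this is where $g\in C^\infty_{\mathrm{loc}}$ enters); (iii) applies Theorem~\ref{theorem:expansion} with the \emph{nonzero} right-hand side $\tau=-L_g^*f\in C^{0,\alpha}_{1-q}$ (this inhomogeneous case is the reason Theorem~\ref{theorem:expansion} is stated for $L_g^*V=\tau$ rather than only $\tau=0$), concluding that $\lambda$ either grows linearly in a cone or decays; the cone alternative is incompatible with $\lambda$ being a \emph{bounded} functional on $C^{0,\alpha}_{-q}$ (the divergent-pairing argument from the end of the proof of Theorem~\ref{theorem:surjectivity}), so $\lambda\to 0$ at infinity; (iv) takes the trace, $\Delta\lambda-n\lambda=-(\Delta f-nf)=0$ --- using that $f$ was built in Lemma~\ref{lemma:eigenfunction} as an \emph{exact} eigenfunction, not merely an approximate one --- and concludes $\lambda\equiv 0$ by the maximum principle, whence $L_g^*f=0$. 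Steps (ii)--(iv) are the heart of the proof and are absent from your argument. Two smaller points: your claim that $-\int_M (R(\gamma)+n(n-1))f\,d\mu_g$ ``vanishes identically'' on the constraint set is false unless $R_g\equiv -n(n-1)$ (it is merely constant there, which is all the minimization needs); and positivity of $f$ near infinity is not immediate in the direction where the leading term degenerates to $p_0(\sqrt{1+r^2}-r)\sim p_0/(2r)$, which the error $O^{2,\alpha}(r^{1-q})$ can dominate when $q\le 2$ (possible for $n=3$) --- one should instead argue via $\liminf_{|x|\to\infty}f\ge 0$, the impossibility of a negative interior minimum for $\Delta f=nf$, and the strong maximum principle.
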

\begin{proof}

\noindent{\bf Case 1: $p_0>0$.} Let $f_0, f_1, \dots, f_n$ be from  Lemma~\ref{lemma:eigenfunction}. Define
\[
	f= p_0 f_0 - (p_1 f_1 + \dots + p_n f_n), 
\]
where $(p_0, p_1, \dots, p_n)$ is the mass of $g$. Note $\Delta f = n f$.  Since $f>0$ outside a large compact set, it follows from the maximum principle that $f$ is everywhere positive. We claim that $f$ is a static potential on $M$.

Consider the functional $\mathcal{F}:\mathcal{M}\to \mathbb{R}$ defined by \eqref{equation:functional} corresponding to this particular choice of $f$ with the coefficients $a_k = p_k$ for all $k=0, 1, \dots, n$. Let $R:\mathcal{M} \to C^{0,\alpha}_{-q}$ be the scalar curvature map that sends $\gamma$ to the scalar curvature of $\gamma$. Define 
$\mathcal{C}_g = \{ \gamma\in \mathcal{M}: R(\gamma) = R_g \}$.   By hypothesis~($\star$), for $\gamma\in \mathcal{C}_g$, we have 
\[
	p_0(\gamma) \ge \sqrt{\big(p_1(\gamma)\big)^2+\cdots + \big(p_n(\gamma)\big)^2}.
\]
We compute that the functional $\mathcal{F}$ achieves a local minimum  at $g$ among the constraint set $\mathcal{C}_g$:
\begin{align*}
\mathcal{F}(\gamma) - \mathcal{F}(g) &=p_{0} p_{0} (\gamma) - \big( p_{1}  p_{1}(\gamma) + \cdots + p_{n} p_{n}(\gamma) \big) \\
 &\ge p_{0}  p_{0} (\gamma)-\sqrt{p_1^2 + \cdots + p_n^2 } \sqrt{\big(p_1(\gamma)\big)^2+\cdots + \big(p_n(\gamma)\big)^2} \\
 &=  p_0 \left(p_{0} (\gamma)- \sqrt{(p_1(\gamma))^2+\cdots + (p_n(\gamma))^2}\right) \\
 &\ge 0 
 \end{align*}
 with equalities realized at $\gamma = g$. 
 
By Theorem \ref{theorem:surjectivity}, $L_g:C^{2,\alpha}_{-q}\to C^{0,\alpha}_{-q}$ is surjective, so we can apply  the method of Lagrange Multipliers (see, for example, ~\cite[Theorem C.1]{Huang-Lee:2019}) to obtain  $\lambda \in (C^{0,\alpha}_{-q})^*$ that satisfies
 \[
 	D\mathcal{F}|_g(h) =\lambda (L_g(h) ) \qquad \mbox{for all $h\in C^{2,\alpha}_{-q}$}.
 \] 
 We substitute the left-hand side above by the first variation formula in Lemma~\ref{lemma:functional} and get
 \begin{align} \label{equation:lambda}
	-\int_M h\cdot L_g^*(f)\, d\mu_g=\lambda (L_g(h) ) \qquad \mbox{ for all } h\in C^{2,\alpha}_{-q}.
\end{align}
Considering $h\in C_c^\infty$ in the above identity implies that $\lambda$, as a distribution,  is a weak solution to $-L_g^* f = L_g^* \lambda$. Taking the trace of the previous equation implies that  $\lambda$ weakly solves an elliptic PDE with locally smooth coefficients, by the hypothesis $g\in C^\infty_{\mathrm{loc}}$. By elliptic interior regularity for distribution solutions (see, for example,~\cite[Theorem 6.33]{Folland:1995}), $\lambda\in C^{2,\alpha}_{\mathrm{loc}}(M)$ with the duality given by
\[ 
	 \lambda (L_g(h) )= \int_M \lambda  L_g(h) \,d\mu_g\qquad \mbox{ for }h\in C^\infty_c(M).
\] 
Together with \eqref{equation:lambda},  $\lambda$ solves $L_g^* \lambda =-L_g^* f$ in the classical sense. 

We recall $L_g^* f\in C^{0,\alpha}_{1-q}$. Applying Theorem~\ref{theorem:expansion} yields that there are numbers $d, C>0$ such that either $|\lambda(x)|\ge C|x|$ in a nonempty cone $U\subset M\setminus K$, or $|\lambda(x)|\le C|x|^{-d}$ in $M\setminus K$.  Since $\lambda$ is a bounded functional on $C^{0,\alpha}_{-q}$, the first case does not occur, by  the same argument as in the last paragraph in the proof of Theorem~\ref{theorem:surjectivity}.  Therefore, we must have $|\lambda(x)|\le C|x|^{-d}$ in $M\setminus K$; in particular, $\lambda(x)\to 0$ as $|x|\to \infty$. Taking the trace of  $L_g^* \lambda =-L_g^* f$  gives that 
\[
	\Delta \lambda - n \lambda = -(\Delta f - nf) = 0.
\]
We conclude $\lambda$ is identically zero by the maximum principle. We conclude that $f$ is a static potential. \\

\noindent{\bf Case 2: $p_0=0$.} We let $f = f_0$ where $f_0$ is from  Lemma~\ref{lemma:eigenfunction}. That is, $f=\sqrt{1+r^2}+ O^{2,\alpha}(r^{1-q})$ and $\Delta f = nf$. Note $f>0$ by maximum principle.  We will show that $f$ satisfies the static equation. Let $\mathcal{F}:\mathcal{M}\to \mathbb{R}$ be the functional defined by \eqref{equation:functional} corresponding to this particular choice of $f$ with $a_0 = 1$ and $a_1=\cdots = a_n=0$. Specifically, 
\[
	\mathcal{F}(\gamma) = p_0(\gamma)  - \int_M \left(R(\gamma) + n(n-1)\right) f\, d\mu_g.
\]
Recall  $\mathcal{C}_g$ defined above.  Among the constraint $\gamma\in \mathcal{C}_g$, we have $\mathcal{F}(\gamma) - \mathcal{F}(g) = p_0(\gamma)-p_0(g)\ge 0$  by hypothesis ($\star$) and thus $\mathcal{F}$ attains the minimum at $\gamma =g$. Now, we can apply the method of the Lagrange multipliers and argue that $f$ is a static potential  as above. 

\end{proof}

We have shown that a metric $g$ that locally minimizes the functional $\mathcal{F}$ possesses a static potential with specific asymptotics. To conclude the proof of Theorem~\ref{theorem:main}, we establish static uniqueness and show isometry to hyperbolic space. (In particular, the case $p_0>0$ in \eqref{equation:static-potential} cannot happen.) 

\begin{lemma}\label{lemma:integral}
Let $(M, g)$ be an asymptotically hyperbolic manifold that admits a positive static potential $f$ with the asymptotics \eqref{equation:static-potential}. Then on any large coordinate ball $B_r$, the following identity holds
\begin{align*}	
	\int_{B_r} f^{-1} |\nabla^2 f-fg|^2\, d\mu_g=  \int_{\partial B_r} \big(\mathrm{Ric}_g  + (n-1) g \big) (\nabla f, \nu) \, d\sigma_g
\end{align*}
where $|\cdot |$ is the norm taken with respect to $g$ and $\nu$ is the outward unit normal vector on $\partial B_r$.
\end{lemma}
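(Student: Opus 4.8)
The plan is to use that $f$ being a (nonzero) static potential forces $(M,g)$ to be static, so that by \eqref{equation:static-2} we have $\nabla^2 f = (\mathrm{Ric}_g + ng)f$, $\Delta f = nf$, and $R_g\equiv -n(n-1)$. The entire identity then follows by recognizing that the trace-free, divergence-free tensor $E:=\mathrm{Ric}_g + (n-1)g$ turns the integrand into a total divergence. The key structural facts I would record first are: (i) $E$ is trace-free, since $\mathrm{tr}_g E = R_g + n(n-1) = 0$; and (ii) $E$ is divergence-free, since $\mathrm{div}\,\mathrm{Ric}_g = \tfrac12\, dR_g = 0$ by the contracted second Bianchi identity together with $R_g$ constant, while $\mathrm{div}\, g = 0$.

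Next I would rewrite the integrand. From the static equation,
\[
	\nabla^2 f - fg = f\big(\mathrm{Ric}_g + (n-1)g\big) = fE,
\]
so using $f>0$ (which is exactly where positivity of the potential is needed to make $f^{-1}$ meaningful) and the trace-free property $g\cdot E = \mathrm{tr}_g E = 0$,
\[
	f^{-1}\,|\nabla^2 f - fg|^2 = f^{-1}(\nabla^2 f - fg)\cdot(fE) = (\nabla^2 f - fg)\cdot E = \nabla^2 f \cdot E.
\]

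Finally I would integrate over the compact region $B_r$ and integrate by parts. Since $E$ is symmetric, the vector field $W = E(\nabla f,\cdot)$ satisfies $\mathrm{div}\,W = E\cdot\nabla^2 f + (\mathrm{div}\,E)(\nabla f) = E\cdot\nabla^2 f$ by fact (ii), and the divergence theorem gives
\[
	\int_{B_r} f^{-1}\,|\nabla^2 f - fg|^2\, d\mu_g = \int_{B_r} E\cdot\nabla^2 f\, d\mu_g = \int_{\partial B_r} E(\nabla f, \nu)\, d\sigma_g = \int_{\partial B_r} \big(\mathrm{Ric}_g + (n-1)g\big)(\nabla f, \nu)\, d\sigma_g,
\]
which is the asserted identity. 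I do not expect a genuine obstacle here: because $B_r$ is compact there are no convergence or boundary-regularity issues, and the asymptotics \eqref{equation:static-potential} play no role in this local computation. The only real content is the algebraic collapse driven by the two observations that $E$ is simultaneously trace-free and divergence-free, both of which are immediate consequences of $(M,g)$ having constant scalar curvature $-n(n-1)$.
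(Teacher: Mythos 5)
Your proof is correct and takes essentially the same route as the paper's own argument (which the paper attributes to X.~Wang): both rewrite $f^{-1}|\nabla^2 f - fg|^2$ as $\mathrm{div}\big(E(\nabla f,\cdot)\big)$ with $E=\mathrm{Ric}_g+(n-1)g$, using precisely the trace-free and divergence-free properties of $E$, and then apply the divergence theorem on the compact ball $B_r$. The only difference is cosmetic: you spell out why $E$ is trace- and divergence-free (constant scalar curvature $-n(n-1)$ for static manifolds plus the contracted Bianchi identity), steps the paper states without elaboration.
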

\begin{proof}
The following identity is due to X. Wang \cite{Wang.X:2005}. Set $S= \mathrm{Ric}_g + (n-1)g$. By the static equation, $S= 	f^{-1}\nabla^2 f-g$ and $S$ is both trace and divergence free. We compute
\begin{align*}
f^{-1}|\nabla^2 f-fg|^2&=f|S|^2\\
&=fg( f^{-1}\nabla^2 f,S) \quad \mbox{ ($S$ is trace-free)}\\
&=g( \nabla^2 f,S)\\
&=\text{div}(S(\nabla f)) \quad \mbox{ ($S$ is divergence-free)}.
\end{align*}
The lemma follows by integrating the identity on $B_r$ and applying the divergence theorem.
\end{proof}

We are ready to prove Theorem~\ref{theorem:main}. We restate the assumption ($\star$) using the precise Banach spaces defined earlier in \eqref{equation:Banach}.

\begin{main-theorem}
Let $n\ge 3$ and $(M,g)$ be an $n$-dimensional asymptotically hyperbolic manifold with scalar curvature $R_g \ge -n(n-1)$ and with the equality $p_0 = \sqrt{p_{1}^2 + \cdots + p_{n}^2}$, where $(p_0, p_1, \dots, p_n)$ is the mass of $g$.   Suppose the following holds:
\begin{itemize}  
\item[($\star$)] There is an open neighborhood $\mathcal{M}$ of $g$ in $\mathcal{B}$ such that any $\gamma\in \mathcal{M}$ with $R(\gamma) = R_g$, the inequality $p_0(\gamma)\ge \sqrt{ (p_1(\gamma))^2 + \dots + (p_n(\gamma))^2}$ holds. 
\end{itemize}
 Then $(M, g)$ is isometric to hyperbolic space.
 \end{main-theorem}
\begin{proof}
By Theorem~\ref{theorem:mass-rigidity}, $M$ admits a positive static potential $f$ of asymptotics \eqref{equation:static-potential}.  Using Lemma~\ref{lemma:integral} and Proposition~\ref{proposition:mass}, in  either the case $p_0 >0$ or $p_0 =0$, we have the following identity
\begin{align*}
\int_{M} f^{-1} |\nabla^2 f-fg|^2\,d\mu_g&=  \lim_{r\to \infty}\int_{\partial B_r} \Big(\mathrm{Ric}_g  + (n-1) g \big) (\mathring{\nabla} f, \nu_0) \, d\sigma_0= -\tfrac{n-2}{2} H(f)=0.
\end{align*}
This implies $\nabla^2 f = fg$, which characterizes hyperbolic space by an elementary argument, which we present in Proposition~\ref{proposition:rigidity} below. 

Alternatively, we could use again that $f$ satisfies the static equation by Theorem~\ref{theorem:mass-rigidity} to see that $g$ is Einstein with $\mathrm{Ric}_g = -(n-1)g$. Then $M$ is isometric to hyperbolic space by Bishop-Gromov volume comparison.
\end{proof}

\begin{proposition}\label{proposition:rigidity}
Let $(M, g)$ be asymptotically hyperbolic. If there is a non-zero function $f\in C^2_{\mathrm{loc}} (M)$ satisfying $f> -c $ for some real number $c$ and the following equation on $M$:
\begin{align}\label{equation:hessian-f}
	\nabla^2 f = fg, 
\end{align}
then $(M, g)$ is isometric to hyperbolic space. 
\end{proposition}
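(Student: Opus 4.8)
The plan is to read off strong pointwise information from $\nabla^2 f = fg$ and to invoke the asymptotically hyperbolic hypotheses only at the one place where a global input is genuinely unavoidable. First I would trace the equation to get $\Delta f = nf$, and then note that $\nabla\big(|\nabla f|^2 - f^2\big) = 2\nabla^2 f(\nabla f,\cdot) - 2f\,df = 0$ because $\nabla^2 f = fg$; hence $E := |\nabla f|^2 - f^2$ is a constant on the connected manifold $M$. Differentiating the Hessian equation once more and commuting the third derivatives via the Ricci identity yields the curvature identity $R(X,Y)\nabla f = (Xf)Y - (Yf)X$. In particular, wherever $\nabla f \neq 0$ every $2$-plane containing $\nabla f$ has sectional curvature $-1$, and a trace gives $\mathrm{Ric}(\nabla f) = -(n-1)\nabla f$.

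Next I would fix the signs of $f$ and $E$ by running the equation along unit-speed geodesics: if $\gamma$ is such a geodesic and $u = f\circ\gamma$, then $u'' = \nabla^2 f(\gamma',\gamma') = u$, so $u(t) = u(0)\cosh t + u'(0)\sinh t$. At a point where $\nabla f\neq 0$ the choice $\gamma'(0) = -\nabla f/|\nabla f|$ gives $u(0)+u'(0) = f - \sqrt{f^2+E}$, which is negative whenever $E>0$, or whenever $E\le 0$ and $f<0$ (such points exist on $\{f<0\}$ if that set is nonempty); then $u\to -\infty$, contradicting $f>-c$. Since a common zero of $f$ and $\nabla f$ would propagate through the ODE to $f\equiv 0$, I conclude $f>0$ everywhere and $E\le 0$; the same ODE, with $f$ bounded below, forces $f\to +\infty$ along every ray to infinity, so $f$ is proper.

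If $E<0$, then $|\nabla f|^2 = f^2+E$ forces $f\ge \sqrt{-E}$, and properness makes $f$ attain $\sqrt{-E}$ at an interior critical point $p$, where $\nabla^2 f = fg>0$, so $p$ is the unique minimum. Along every unit-speed geodesic from $p$ one has $u(0)=\sqrt{-E}$, $u'(0)=0$, hence $f = \sqrt{-E}\,\cosh r$ with $r = d_g(p,\cdot)$. Inserting this radial form into $\nabla^2 f = fg$ in geodesic polar coordinates $g = dr^2 + g_r$ gives $\tfrac12\partial_r g_r = \coth(r)\,g_r$, so $g_r = \sinh^2(r)\,h_0$ for an $r$-independent tensor $h_0$; smoothness at $p$ identifies $h_0$ with the round unit sphere metric, and $g = dr^2 + \sinh^2(r)\,h$ is hyperbolic space.

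The case $E=0$ is where I expect the real difficulty, and where the asymptotics must enter. Here $f>0$ has no critical points, and $w := \log f$ satisfies $|\nabla w|\equiv 1$ and $\nabla^2 w = g - dw\otimes dw$; thus the level sets are umbilic with shape operator the identity, the gradient lines are complete unit-speed geodesics, and $g$ splits as a warped product $g = dw^2 + e^{2w}\bar g$ over a fixed cross-section $(N,\bar g)$, with $w$ ranging over all of $\mathbb{R}$. The pointwise identities pin down only the mixed sectional curvatures (automatically $-1$ for this warping), so flatness of $\bar g$ is not formal and is the crux. To close it I would use that the purely fiber sectional curvatures equal $e^{-2w}\bar K - 1$, where $\bar K$ is the corresponding sectional curvature of $\bar g$ and is independent of $w$. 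Fixing a point of $N$ and a fiber $2$-plane and letting $w\to -\infty$, the base point leaves every compact set, so by condition (2) of Definition~\ref{definition:AH} the sectional curvature there tends to $-1$; since $e^{-2w}\to\infty$, this is possible only if $\bar K = 0$. Hence $\bar g$ is flat, $g = dw^2 + e^{2w}\bar g$ is the horospherical model of hyperbolic space, and $(M,g)$ is isometric to $\mathbb{H}^n$. As an alternative for either case, one may instead verify $\mathrm{Ric}_g = -(n-1)g$ and invoke Bishop--Gromov comparison, exactly as in the proof of Theorem~\ref{theorem:main}.
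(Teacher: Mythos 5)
Your proposal is correct in substance but is organized around a genuinely different device than the paper's proof. The paper splits on whether $f$ has a critical point: the critical-point case is disposed of by citing classical results (Tashiro, Kanai, Qing), and in the no-critical-point case the authors differentiate the curvature identity a second time, invoke the second Bianchi identity to get the ODE $K'(t) = -2\tfrac{f}{|\nabla f|}\,(K+1)$ for the tangential sectional curvature along gradient geodesics, and then rule out the two solution regimes of $x' = 1 - x^2$ (with $x = f/|\nabla f|$) using, respectively, the asymptotically hyperbolic curvature decay and the hypothesis $f > -c$. You instead organize everything around the conserved quantity $E = |\nabla f|^2 - f^2$, which the paper never uses: $E>0$ is killed by $f>-c$ (this is their ``Case 2,'' where $f$ crosses zero and dives to $-\infty$); $E<0$ produces an interior minimum and you run the Obata-type polar-coordinate computation by hand (this is their critical-point case, which they only cite); and $E=0$ is handled by the warped-product splitting $g = dw^2 + e^{2w}\bar g$, where your fiber-curvature relation $K+1 = e^{-2w}\bar K$ is exactly the paper's $K+1 = Be^{-2t}$, obtained with no Bianchi-identity computation, and the asymptotic decay kills $\bar K$ just as it kills $B$ (their ``Case 1''). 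Your route is more self-contained and makes transparent exactly where each of the two global hypotheses enters; the paper's route avoids the global splitting $M \cong \mathbb{R}\times N$ and works purely along individual geodesics.

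Two loose ends should be repaired. First, the sentence ``the same ODE, with $f$ bounded below, forces $f\to+\infty$ along every ray to infinity, so $f$ is proper'' is false as stated for $E=0$: on $\mathbb{H}^n$ itself, $f=\sqrt{1+r^2}-x_1$ has $E=0$, is positive, yet decays to zero along the positive $x_1$-axis. Properness does hold when $E<0$, but the correct reason is the uniform gap at a fixed basepoint $p_0$: for any unit-speed ray from $p_0$, $u(0)+u'(0) \ge f(p_0) - |\nabla f|(p_0) = f(p_0)-\sqrt{f(p_0)^2+E} > 0$, uniformly over all rays, which forces $f(x)\to\infty$ as $d_g(p_0,x)\to\infty$. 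Since you only invoke properness in the case $E<0$, this is a wording fix, not a structural one. Second, in the $E=0$ case, ``$\bar g$ flat'' does not yield ``horospherical model of hyperbolic space'': a complete flat $(N,\bar g)$ could be a cylinder or a torus quotient, in which case $dw^2+e^{2w}\bar g$ is a hyperbolic cusp manifold, not $\mathbb{H}^n$. What you actually obtain is constant sectional curvature $-1$, and you must then use the asymptotically hyperbolic assumption once more—exactly as the paper does in its closing lines—to exclude nontrivial quotients $\mathbb{H}^n/\Gamma$ (for instance, because the end $M\setminus K \cong \mathbb{H}^n\setminus B$ is simply connected, or by the volume-growth/Bishop--Gromov alternative you mention, which is fine as written).
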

\begin{proof}
If $f$ has at least one critical point, the result is classical (see  \cite{Tashiro:1965} and also \cite[Theorem C]{Kanai:1983} and \cite[Lemma 3.3]{Qing:2004}).

We now assume that $f$ has no critical point in $M$, i.e. $\nabla f$ is never zero.  We compute the first and second covariant derivatives of $\nabla^2 f = fg$ at a point $p\in M$ with respect to a geodesic normal coordinate chart:
\begin{align*}
	0&= f_{;ijk} - f_{;ikj} - R_{kj\ell i} f^\ell=f_k g_{ij} - f_j g_{ik} -R_{kj\ell i} f^\ell\\
	0&= f(g_{km} g_{ij} - g_{jm} g_{ik} ) - R_{kj\ell i;m} f^\ell - R_{kjmi} f,
\end{align*}	
where  $R_{kj\ell i} = g(\nabla_{\partial_k} \nabla_{\partial_j } \partial_{\ell}-\nabla_{\partial_j} \nabla_{\partial_k } \partial_{\ell} , \partial_i )$ in our convention.  We than obtain the following formulas, for any vector fields $W, X, Y, Z$, 
\begin{align} \label{equation:curvature}
\begin{split}
	R(X, Y, \nabla f, Z) &= g(\nabla f, X) g(Y, Z) - g(\nabla f, Y) g(X, Z)\\
	(\nabla_Z R) (X, Y, \nabla f, W) &= -f\Big( R(X, Y, Z, W) - g(X, Z) g(Y, W) + g(Y, Z) g(X, W)\Big).
\end{split}
\end{align}

Let $\gamma:(-\infty,\infty) \to M$ be the integral  curve of $\frac{\nabla f}{|\nabla f|}$ through a point $p\in M$, i.e. $\gamma'(t) =\frac{\nabla f(\gamma(t))}{|\nabla f|(\gamma(t))}$. By direct computation using \eqref{equation:hessian-f}, we have  $\nabla_{\gamma'} \gamma'=0$  and thus $\gamma$ is a geodesic parametrized by arc length. We compute
\begin{align}
	\frac{d}{dt} f(\gamma(t)) &= g( \nabla f, \gamma'(t) ) = |\nabla f (\gamma(t))| >0 \label{equation:f'}\\
	\frac{d^2} {dt^2} f(\gamma(t))&=\nabla^2 f(\gamma'(t), \gamma'(t) ) = f(\gamma(t)). \notag
\end{align}
Solving the ODE yields that, for $t\in (-\infty, \infty)$,  
\begin{align}\label{equation:solution-f}
f(\gamma(t)) = C_1e^t + C_2 e^{-t},
\end{align} 
 where $C_1 \ge 0 \ge C_2$ and $C_1, C_2$ are not both zero. 

Let $X, Y$ be two  orthonormal vector fields perpendicular to $\gamma'$ and parallel along $\gamma$. 
The sectional curvature $K(X\wedge \gamma') = R(X, \gamma', \gamma', X) = -1$  along $\gamma$ by \eqref{equation:curvature}. Next, we compute  that the sectional curvature $K(t):=K(X\wedge Y)=R(X, Y, Y, X)$ along $\gamma(t)$. In what follows, we slightly abuse the notation and denote $f(t) = f(\gamma(t))$ and $|\nabla f|(t) = |\nabla f|(\gamma(t))$. We compute, for all $t\in (-\infty, \infty)$,  
 \begin{align*} 
  \begin{split}
 	K'(t) &=\gamma' (R(X, Y, Y, X) ) =( \nabla_{\gamma'} R) (X, Y, Y, X) \\
	&= -( \nabla_Y R)(X, Y, X, \gamma') - (\nabla_X R)(X,  Y, \gamma', Y) \qquad  \mbox{(by the second Bianchi identity)}\\
	&=-2 \frac{f(t)}{ |\nabla f|(t)} (K(t)+ 1)
\end{split}
 \end{align*}
 where in the last equation we use the second equation in \eqref{equation:curvature}. We would like to show that $K(t) +1 \equiv 0$ for all $t$. Suppose, to give a contradiction, that $K(t) +1 $ is not identically zero. Then $K(t)+1$ has no zeros, and we can divide the equation of $K'(t)$ by $K(t) +1$ to achieve
 \begin{align}\label{equation:curvature-ODE}
 		\frac{d}{dt} \log |K(t) + 1 |  = -2 \frac{f(t)}{ |\nabla f|(t)}.
 \end{align}
Note that  $ \frac{f(t)}{ |\nabla f|(t)}$ satisfies the following ODE on $\gamma(t)$ by direct computation:
 \begin{align*}
	\frac{d}{dt} \left( \frac{f(t)}{|\nabla f| (t) } \right) = 1 - \left(\frac{f(t)}{|\nabla f|(t)}\right)^2 \quad \mbox{ for } t\in (-\infty, \infty).
\end{align*}
Solving this ODE yields that either (1) $\frac{|f|}{|\nabla f|} \equiv 1$ on $\gamma(t)$, or (2) there is a constant $C> 0$ such that $\frac{f}{|\nabla f|} = 1-\frac{2}{Ce^{2t} + 1}$  on $\gamma(t)$.  For Case (1),  we find $K(t) + 1$ to be either $Be^{-2t}$ or $Be^{2t}$ for some nonzero constant $B$ for $t\in (-\infty, \infty)$ by \eqref{equation:curvature-ODE}, which contradicts the asymptotically hyperbolic assumption.  For Case $(2)$,  $f(t)$ has a zero and hence $C_2 <0$ in \eqref{equation:solution-f}, which contradicts the assumption $f > -c$.


 Varying $p\in M$, we conclude that the sectional curvature of $M$ is identically $-1$, which implies  the universal cover of $M$ is hyperbolic space. Together with the asymptotically hyperbolic assumption, $M$ is isometric to hyperbolic space. 
\end{proof}

\begin{remark}
We thank Piotr Chru\'sciel for pointing out an example that demonstrates the
necessity of the hypothesis $f\ge -c$.  Let $(\Sigma, h)$ be a complete $(n-1)$-dimensional manifold (either closed or unbounded) with bounded sectional curvature. Consider the  product $M = (-\infty, \infty) \times \Sigma$ endowed with the warped product metric $g = dt^2 + (\cosh t)^2 h$. One can directly check that  the sectional curvature of $(M, g)$ approaches $-1$ as $t\to \pm \infty$ and that $f(t) = \sinh t$ satisfies $\nabla^2 f = fg$. (In particular, $\frac{f}{|\nabla f|} = 1-\frac{2}{e^{2t} + 1}$ realizes Case (2) above.) If we further specify $\Sigma = \mathbb{R}^{n-1}$ endowed with a metric $h$ whose sectional curvature is identically $-1$ outside a compact set of $\Sigma$, the sectional curvature of the resulting metric $g$  approaches $-1$ toward the infinity of $M$. However, $(M, g)$ is of constant sectional curvature $-1$ if and only if $h$ is of constant sectional curvature $-1$ everywhere on $\Sigma$.  
\end{remark}


\bibliographystyle{amsplain}
\bibliography{2018}

\end{document}